\newtheorem{theorem}{Theorem}
\newtheorem{lemma}[theorem]{Lemma}
\newtheorem{proposition}[theorem]{Proposition}
\begin{document}

\begin{abstract}
Let $X$ be a non-compact symmetric space of dimension $n$. We prove that if $f\in L^{p}(X)$, $1\leq p\leq 2$, then the Riesz means $S_{R}^{z}\left(
f\right) $ converge to $f$ almost
everywhere as $R\rightarrow \infty $, whenever $\operatorname{Re}z>\left( n-\frac{1}{2}\right) \left( \frac{2}{p}-1\right) $.
\end{abstract}
\dedicatory{To the memory of Professor Michel Marias.}
\title[Riesz means]{Riesz means on symmetric spaces}
\author{A. Fotiadis and  E. Papageorgiou }
\title{Riesz means on symmetric spaces}
\thanks{The second author is supported by the Hellenic Foundation for Research and Innovation, Project HFRI-FM17-1733.}
\curraddr{Department of Mathematics, Aristotle University of Thessaloniki,
Thessaloniki 54.124, Greece }
\subjclass[2000]{42B15, 43A85, 22E30}
\keywords{Symmetric spaces, Riesz means}
\maketitle

\section{Introduction and statement of the results}

In this article we study the almost everywhere convergence of the Riesz means on a noncompact symmetric space of arbitrary rank. To state our results, we need to introduce some notation. 

Let $G$ be a
semi-simple, noncompact, connected Lie group with finite center and let $K$
be a maximal compact subgroup of $G$. We consider the $n$-dimensional symmetric space of
noncompact type $X=G/K$, and let $\operatorname{dim}%
X=n $. Denote by $\mathfrak{g}$ and $\mathfrak{k}$ the Lie
algebras of $G$ and $K$, respectively. We have the Cartan decomposition $%
\mathfrak{g}=\mathfrak{p}\oplus \mathfrak{k}$. Let $\mathfrak{a}$ be a
maximal abelian subspace of $\mathfrak{p}$ and $\mathfrak{a}^{\ast }$ its
dual. If $\operatorname{dim}\mathfrak{a}=l$, we say that $X$ has rank $l$.

The Killing form on $\mathfrak{g}$ restricts to a positive definite form on $\mathfrak{a}$, which in turn induces a positive inner product and hence a norm $\| \cdot \|$ on $\mathfrak{\ a^{\ast}}$. Denote by $\rho $ the half sum of positive roots, counted with their
multiplicities. Fix $R\geq \Vert \rho \Vert ^{2}$ and $z\in \mathbb{C}$ with
$\operatorname{Re}z\geq 0$, and consider the bounded function
\begin{equation}
s_{R}^{z}(\lambda )=\left( 1-\frac{\Vert \rho \Vert ^{2}+\Vert \lambda \Vert
^{2}}{R}\right) _{+}^{z},\;\lambda \in \mathfrak{a^{\ast }}.  \label{mult}
\end{equation}

Denote by $\kappa _{R}^{z}$ its inverse spherical Fourier transform in the
sense of distributions and consider the so-called Riesz means operator $%
S_{R}^{z}$:
\begin{equation}
S_{R}^{z}(f)(x)=\int_{G}f(y)\kappa_{R}^{z}(y^{-1}x)dy=(f\ast\kappa _{R}^{z})(x),\quad f\in C_{0}(X).  \label{operatorX}
\end{equation}

For every pair $p,q$ such that $1\leq p,q\leq \infty $, denote by $%
(L^{p}+L^{q})(X)$ the Banach space of all functions $f$ on $X$ which admit a
decomposition $f=g+h$ with $g\in L^{p}$ and $h\in L^{q}$. The norm of $f\in
(L^{p}+L^{q})(X)$ is given by
\begin{equation}\label{norm}
\Vert f\Vert _{\left( p,q\right) }=\inf \left\{ \Vert f\Vert _{p}+\Vert
g\Vert _{q}:\text{ for all decompositions }f=g+h\right\} .
\end{equation}%
For $q\geq 1$, denote by $q^{\prime }$ its conjugate. In the present work we
prove the following results.

\begin{theorem}
\label{th0}Let $z\in \mathbb{C}$ with $\operatorname{Re}z\geq n-\frac{1}{2}$ and
consider $q>2$. Then, for every $p$ such that $1\leq p\leq q^{\prime }$, and
for every $r\in \lbrack qp^{\prime }/(p^{\prime }-q),\infty ]$, $S_{R}^{z}$
is uniformly bounded from $L^{p}(X)$ to $(L^{p}+L^{r})(X)$.
\end{theorem}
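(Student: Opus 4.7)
The plan is to perform a local--global decomposition of the convolution kernel $\kappa_R^z$ and combine an $L^p$--preserving bound for the local piece with a Young--type convolution bound for the global piece. Fix a smooth bi-$K$-invariant cutoff $\chi$ equal to $1$ on a neighborhood of the identity coset $eK$ and compactly supported, and split
\[
\kappa_R^z \;=\; \chi\,\kappa_R^z \;+\; (1-\chi)\,\kappa_R^z \;=:\; \kappa_R^{z,0} + \kappa_R^{z,\infty},
\]
so that $S_R^z f = f\ast\kappa_R^{z,0} + f\ast\kappa_R^{z,\infty}$. It then suffices to estimate each piece with constants that are uniform in $R\geq\|\rho\|^2$.

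The analytic core is a pair of kernel estimates. For the local piece, the Plancherel formula for the spherical Fourier transform and the Harish--Chandra expansion near the origin allow one to compare $\kappa_R^z$ on a bounded neighborhood of $eK$ with a Euclidean Bochner--Riesz kernel of index $z$ in dimension $n$; since the hypothesis $\operatorname{Re} z\geq n-\tfrac{1}{2}$ lies comfortably above the classical Euclidean integrability threshold $(n-1)/2$, one obtains
\[
\|\kappa_R^{z,0}\|_{L^1(X)} \;\leq\; C, \qquad R\geq \|\rho\|^2.
\]
For the global piece, the decay $\phi_0(x)\asymp e^{-\rho(H(x))}$ of the ground spherical function combined with the polar-coordinate Jacobian $\prod_\alpha(\sinh\alpha(H))^{m_\alpha}\asymp e^{2\rho(H)}$ on $\mathfrak{a}^+$ produces, after bounding $|\kappa_R^{z,\infty}|/\phi_0$ uniformly at infinity, an integrand of size $e^{(2-q)\rho(H)}$, which is integrable over $\mathfrak{a}^+$ precisely when $q>2$. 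Hence for every $q>2$ one obtains $\|\kappa_R^{z,\infty}\|_{L^q(X)}\leq C_q$, uniformly in $R$.

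Given these bounds, the conclusion is quick. Young's inequality on $G$ yields $\|f\ast\kappa_R^{z,0}\|_p\lesssim\|f\|_p$, and for $1\leq p\leq q'$ it yields $\|f\ast\kappa_R^{z,\infty}\|_{r_0}\lesssim\|f\|_p$ with $1/r_0 = 1/p+1/q-1 = (p'-q)/(qp')$, i.e.\ $r_0 = qp'/(p'-q)$. Since $p\leq r_0$, splitting a function according to whether its modulus exceeds $1$ produces the continuous embedding $L^{r_0}(X)\hookrightarrow L^p(X)+L^r(X)$ for every $r\in[r_0,\infty]$, so $S_R^z$ is uniformly bounded from $L^p(X)$ into $L^p(X)+L^r(X)$ with the required range of $r$. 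I expect the main obstacle to be the kernel estimates of the second paragraph, and in particular the uniform (in $R$) local $L^1$ bound on $\kappa_R^{z,0}$: in arbitrary rank this requires careful handling of the spherical Fourier integral, using the cancellation provided by the factor $(1-(\|\rho\|^2+\|\lambda\|^2)/R)_+^z$ together with sharp control of the $c$-function on the support of $s_R^z$, whereas the functional-analytic combination in the final step above is comparatively routine.
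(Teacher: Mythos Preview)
Your architecture---local/global split of $\kappa_R^z$, an $L^1$ bound for the local piece yielding $L^p\to L^p$, an $L^q$ bound ($q>2$) for the global piece yielding $L^p\to L^r$ via Young, then combination into $L^p+L^r$---is exactly the paper's, and the functional-analytic wrap-up is indeed routine. The substance lies entirely in the two kernel estimates, and for both the paper proceeds differently from what you sketch.

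For the local $L^1$ bound the paper does \emph{not} compare with a Euclidean Bochner--Riesz kernel via any expansion of spherical functions. Instead it writes $s_R^z(\lambda)=h_r^z(\lambda)\,e^{-(\|\lambda\|^2+\|\rho\|^2)/R}$, so that $\kappa_R^z=h_r^z(\sqrt{-\Delta-\|\rho\|^2})\,p_{1/R}$, introduces a dyadic partition of unity $\{\chi_j\}$ concentrated near the edge of the multiplier support, and for each piece uses the representation $h(\sqrt{-\Delta-\|\rho\|^2})=(2\pi)^{-1}\int\hat h(t)\cos(t\sqrt{-\Delta-\|\rho\|^2})\,dt$ together with finite propagation speed of the wave operator and Gaussian heat-kernel bounds to control $\|\kappa_{j,r}^z\|_{L^1}$ on dyadic annuli inside $B(0,1)$. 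Summing in $j$ gives $\|\kappa_R^z\|_{L^1(B(0,1))}\leq c$ for $\operatorname{Re}z>n/2$, uniformly in $R$. Your proposed Euclidean-comparison route is not clearly workable in arbitrary rank uniformly in $R$; the heat-kernel/finite-propagation method is precisely what allows the argument to go through without an explicit inverse spherical transform.

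For the global piece you assert that $|\kappa_R^{z,\infty}|/\varphi_0$ is bounded uniformly at infinity but do not indicate why, nor where the threshold $\operatorname{Re}z\geq n-\tfrac12$ enters. The paper obtains this via the inverse Abel transform: a lemma of Lohou\'e--Marias bounds $|\kappa_R^z(x)|$ by $\varphi_0(x)$ times an $L^2$ norm of derivatives of the \emph{Euclidean} inverse Fourier transform $\mathcal F^{-1}s_R^z$ on $\mathfrak a$. The latter is an explicit Bessel function, and its asymptotics yield
\[
|\kappa_R^z(x)|\;\leq\; c\,\varphi_0(x)\,R^{-\frac12(\operatorname{Re}z-n+\frac12)}\,|x|^{-\operatorname{Re}z-\frac12},\qquad |x|>1,\ R\geq\|\rho\|^2+1.
\]
The exponent of $R$ is nonpositive exactly when $\operatorname{Re}z\geq n-\tfrac12$; this is the origin of the hypothesis, and your outline does not account for it. Once this pointwise estimate is in hand, $\varphi_0\in L^q(X)$ for every $q>2$ gives the uniform $L^q$ kernel bound, and the conclusion follows as you describe.
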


Next we deal with the maximal operator $S_{\ast }^{z}$ associated with Riesz
means:
\begin{equation*}
S_{\ast }^{z}(f)(x)=\sup_{R>\Vert \rho \Vert ^{2}}|S_{R}^{z}(f)(x)|,\;f\in
L^{p}({X}),\;1\leq p\leq 2.
\end{equation*}%
Set
\begin{equation*}
Z_0(n,p)=\left( n-\frac{1}{2}\right) \left( \frac{2}{p}-1\right) .
\end{equation*}%
We have the following result.

\begin{theorem}
\label{Th1} Let $1\leq p\leq 2$ and consider $q>2$. If $\operatorname{Re}z>Z_0(n,p)$,
then for every $s\geq pq/(2-p+pq-q)$, there is a constant $c\left( z\right) >0$, such
that for every $f\in L^{p}(X)$,
\begin{equation*}
\Vert S_{\ast }^{z}f\Vert _{\left( p,s\right) }\leq c(z)\Vert f\Vert _{p}.
\end{equation*}
\end{theorem}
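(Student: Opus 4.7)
The strategy is to interpolate two maximal endpoint estimates by Stein's analytic complex interpolation theorem, applied to a linearization $T^{z}f(x)=S_{R(x)}^{z}f(x)$ for a measurable $R:X\rightarrow(\Vert\rho\Vert^{2},\infty)$; uniform bounds on this family in the choice of $R$ are equivalent to bounds on $S_{\ast}^{z}$.

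The first endpoint, at $\operatorname{Re} z > 0$, is the $L^{2}$ maximal bound $\|S_{\ast }^{z}f\|_{2}\le C(z)\|f\|_{2}$. This is a standard consequence of the spectral theorem for the Laplacian on $X$: the spherical Plancherel formula reduces the maximal estimate to a uniform bound on a Littlewood--Paley square function built from $s_{R}^{z}$ and its $R$-derivative $\partial_{r}s_{r}^{z}(\lambda)=z(\Vert\rho\Vert^{2}+\Vert\lambda\Vert^{2})\,r^{-2}\,(1-(\Vert\rho\Vert^{2}+\Vert\lambda\Vert^{2})/r)_{+}^{z-1}$, both of which are well-behaved once $\operatorname{Re}z>0$ because of the extra vanishing at the spectral edge.

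The second endpoint, at $\operatorname{Re} z = n-\frac{1}{2}$, asserts $\|S_{\ast }^{z}f\|_{(1,q)}\le C(z)\|f\|_{1}$; here Theorem \ref{th0} supplies only a uniform single-scale bound, which must be upgraded to a maximal bound. The plan is to use the fractional-integral identity
\begin{equation*}
S_{R}^{z}f=\frac{\Gamma (z+1)}{\Gamma (z-z_{0})\Gamma (z_{0}+1)R^{z}}\int_{\Vert\rho\Vert^{2}}^{R}(R-t)^{z-z_{0}-1}\,t^{z_{0}}\,S_{t}^{z_{0}}f\,dt,\quad \operatorname{Re}(z-z_{0})>0,
\end{equation*}
verifiable by a change of variable on the multiplier side, with $\operatorname{Re}z_{0}=n-\frac{1}{2}$. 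This represents $S_{R}^{z}$ as a positive-weighted average of $\{S_{t}^{z_{0}}\}_{t}$. Combined with the uniform single-scale bound of Theorem \ref{th0} and Minkowski's inequality for the $(1,q)$-norm, the identity converts the supremum over $R$ into an estimate for a fractional averaging operator applied to the linearization, which can then be controlled uniformly.

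Stein's analytic interpolation between these two vertical lines, with $[L^{1},L^{2}]_{\theta}=L^{p}$ for $\theta=2(1-1/p)$ and a suitable interpolation of the sum-space targets yielding $L^{p}+L^{s}$ with $s=pq/(2-p+pq-q)$, delivers the claimed bound for $\operatorname{Re}z>Z_{0}(n,p)$. The principal obstacle is the $L^{1}$ endpoint: passing from the uniform single-scale bound to an honest maximal bound requires care, and one must verify at most polynomial growth in $|\operatorname{Im}z|$ of the constants along the line $\operatorname{Re}z=n-\frac{1}{2}$ so that Stein's interpolation applies.
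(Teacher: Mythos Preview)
Your overall architecture matches the paper's: an $L^{2}$ maximal bound for $\operatorname{Re}z>0$ via the spectral theorem, an $L^{1}$ (or $L^{p}$, $p$ near $1$) maximal bound near $\operatorname{Re}z=n-\tfrac12$, and Stein's analytic interpolation between them. The $L^{2}$ endpoint is exactly the paper's $L^{2}$ maximal proposition (quoted there from Giulini--Mauceri).

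The gap is at the $L^{1}$ endpoint, and it is fatal as written. The Riesz fractional-integral identity does \emph{not} upgrade a uniform single-scale bound to a maximal one. After linearization the weights $R(x)^{-z}(R(x)-t)^{z-z_{0}-1}t^{z_{0}}$ depend on $x$, so Minkowski's inequality in the $(1,q)$-norm cannot be pushed through the $t$-integral to exploit the bound $\|S_{t}^{z_{0}}f\|_{(1,q)}\le C\|f\|_{1}$ from Theorem~\ref{th0}. What the identity actually yields is only the pointwise domination $|S_{R}^{z}f|\le C_{z,z_{0}}\,S_{\ast}^{z_{0}}f$, which reduces the maximal bound at $z$ to the maximal bound at the \emph{smaller} index $z_{0}$: this is circular, and in the wrong direction. (Note also that to use the identity on the line $\operatorname{Re}z=n-\tfrac12$ you would need $\operatorname{Re}z_{0}<n-\tfrac12$, where Theorem~\ref{th0} is no longer available.) Your sentence ``the identity converts the supremum over $R$ into an estimate\ldots which can then be controlled uniformly'' is precisely the step that does not go through.

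The paper closes this gap by an entirely different mechanism, retaining the local/far splitting $S_{R}^{z}=S_{R}^{z,0}+S_{R}^{z,\infty}$. For the far part, the kernel bound $|\kappa_{R}^{z,\infty}(x)|\le c\,\varphi_{0}(x)\,|x|^{-\operatorname{Re}z-1/2}$ is uniform in $R$, so $\sup_{R}|S_{R}^{z,\infty}f|\le |f|\ast g$ with a fixed $g\in L^{q}$, and the maximal bound is immediate. For the local part, the paper subtracts the heat semigroup and writes the remainder, via the Mellin transform, as $\int_{\mathbb{R}}\mathcal{M}(\gamma)\,R^{-i\gamma}(-\Delta)^{i\gamma,0}\,d\gamma$; since $|R^{-i\gamma}|=1$, taking absolute values removes the $R$-dependence \emph{pointwise}, and the maximal bound then follows from $L^{p}$ (and weak-$(1,1)$) bounds on the localized imaginary powers $(-\Delta)^{i\gamma,0}$ together with the decay $|\mathcal{M}(\gamma)|\le c(1+|\gamma|)^{-\operatorname{Re}z-1}$. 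This pointwise elimination of $R$ is the idea missing from your proposal.
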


Note that the $(p,s)$ norm is defined in (\ref{norm}). As a corollary of the Theorem \ref{Th1}, we obtain the almost everywhere
convergence of Riesz means.

\begin{theorem}
\label{Th2} Let $1\leq p\leq 2$. If $\operatorname{Re}z>Z_0(n,p)$, then for $f\in
L^{p}(X)$,
\begin{equation}
\lim\limits_{R\rightarrow +\infty }S_{R}^{z}f(x)=f(x),\;\text{a.e.. }
\label{aec}
\end{equation}
\end{theorem}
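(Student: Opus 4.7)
The plan is the classical Stein-type density argument: combine the maximal estimate of Theorem \ref{Th1} with pointwise convergence on a dense subspace, and transfer to all of $L^p(X)$ by approximation.

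First I would establish pointwise convergence on a dense subspace $\mathcal{D}\subset L^p(X)$. A convenient choice is $C_{c}^{\infty}(X)$, which is dense in $L^{p}(X)$ for every $1\leq p<\infty$. For $g\in\mathcal{D}$ one has $g\in L^{2}(X)$, so the Helgason--Fourier inversion formula applies. Since $|s_{R}^{z}(\lambda)|\leq 1$ uniformly and $s_{R}^{z}(\lambda)\to 1$ pointwise as $R\to\infty$ for each $\lambda\in\mathfrak{a}^{*}$, and since the Helgason transform $\widetilde{g}(\lambda,k)$ has rapid decay in $\lambda$ (which, combined with the polynomial growth of the Plancherel density $|c(\lambda)|^{-2}$, furnishes an integrable majorant), dominated convergence under the integral gives $S_{R}^{z}g(x)\to g(x)$ for every $x\in X$.

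Next I would convert the $(p,s)$ maximal bound of Theorem \ref{Th1} into a distributional level-set estimate. For any $h\in L^{p}(X)$ and $\alpha>0$, whenever $S_{*}^{z}h=u+v$ with $u\in L^{p}(X)$ and $v\in L^{s}(X)$, Chebyshev's inequality yields
\begin{equation*}
 |\{ x: S_{*}^{z}h(x)>2\alpha\}|\;\leq\;\alpha^{-p}\|u\|_{p}^{p}\;+\;\alpha^{-s}\|v\|_{s}^{s}.
\end{equation*}
Taking the infimum over admissible decompositions and applying Theorem \ref{Th1} produces a bound of the form $c(z)\bigl(\alpha^{-p}\|h\|_{p}^{p}+\alpha^{-s}\|h\|_{p}^{s}\bigr)$, which vanishes as $\|h\|_{p}\to 0$ for each fixed $\alpha$. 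Given $f\in L^{p}(X)$ and $\varepsilon>0$, pick $g\in\mathcal{D}$ with $\|f-g\|_{p}<\varepsilon$. Writing $S_{R}^{z}f-f=\bigl(S_{R}^{z}(f-g)-(f-g)\bigr)+\bigl(S_{R}^{z}g-g\bigr)$ and using the pointwise convergence on $\mathcal{D}$ from the first step,
\begin{equation*}
 \{x:\limsup_{R}|S_{R}^{z}f(x)-f(x)|>2\alpha\}\subset\{x:S_{*}^{z}(f-g)(x)>\alpha\}\cup\{x:|f(x)-g(x)|>\alpha\}.
\end{equation*}
Both sets on the right have measure controlled by a function of $\varepsilon$ that vanishes as $\varepsilon\to 0$, by the level-set estimate just derived and the plain Chebyshev bound $|\{|f-g|>\alpha\}|\leq\alpha^{-p}\varepsilon^{p}$. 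Letting $\varepsilon\to 0$, and then $\alpha\to 0$ along a countable sequence, one concludes $\limsup_{R}|S_{R}^{z}f-f|=0$ almost everywhere, which is exactly \eqref{aec}.

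The main obstacle I anticipate lies in the first step, not the last: one must verify that the chosen dense class genuinely yields everywhere pointwise convergence, with an $R$-independent dominant in the inversion formula. Balancing the Schwartz-type decay of $\widetilde{g}$ against the polynomial growth of $|c(\lambda)|^{-2}$ in higher rank is what carries the day, and once that is settled the remainder of the argument is formal manipulation of the two-component norm $\|\cdot\|_{(p,s)}$ combined with Theorem \ref{Th1}.
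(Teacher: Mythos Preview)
Your proposal is correct and follows essentially the same route as the paper: the authors simply invoke Theorem~\ref{Th1} (together with Propositions~13 and~14) and cite the standard measure-theoretic density argument in Grafakos, whereas you spell out that argument explicitly---pointwise convergence on $C_c^\infty(X)$ via Fourier inversion, extraction of a weak-type level-set bound from the $\|\cdot\|_{(p,s)}$ maximal estimate, and the usual $\limsup$ inclusion. The only cosmetic difference is that the paper also points to the separate local and infinity maximal bounds (Propositions~13--14), which can substitute for the $(p,s)$ estimate in the density step, but your direct use of Theorem~\ref{Th1} is entirely sufficient.
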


Our result treats the general case of noncompact symmetric spaces of \textit{all} ranks. It is interesting that the index $Z_0(n,p)$ only depends on the Euclidean dimension of $X$ and not on the rank of $X$. The only known results studying the Riesz means on noncompact symmetric spaces are \cite{GIUMA, zhu}, where the authors treat the case of rank one noncompact symmetric spaces, as well as the case of arbitrary rank when $G$ is complex, and the case of $SL(3,\mathbb{H})/Sp(3)$ respectively. 

Here we treat the general case of noncompact symmetric spaces
of all ranks, by using the inverse Abel transform. This way we can study the general case of a noncompact symmetric space, an area that remained inactive since the seminal work \cite{GIUMA} in 1991. The price we pay is that our result is valid for $\operatorname{Re}z$ larger than $Z_0(n,p)=\left( n-\frac{1}{2}\right) \left( \frac{2}{p}-1\right) $. Note that in the setting of $\mathbb{R}^{n}$, \cite{STEIN27}, as well as in
case of the rank one symmetric spaces, \cite{GIUMA}, (\ref{aec}) is valid
for $\operatorname{Re}z$ larger than the critical index $z_0(n,p)=\left( \frac{n-1}{2}\right) \left( \frac{2}{p}-1\right) $. Thus, we can treat the arbitrary rank case but our result is not optimal, as a consequence of the lack of an explicit formula for the inverse Abel transform in the general case of a symmetric space.

Many authors have investigated the almost everywhere convergence
of Riesz means. They have already been extensively studied in the case of $%
\mathbb{R}^{n}$ (\cite{CHRIST7,CHRIST8,STEIN27,STEIN29} as well as in the
book \cite{DAVCHANG}). In the case of elliptic differential operators on
compact manifolds they are treated in (\cite%
{Berard,ChristSogge,GIUTRA,HORM,SEEGER,Sogge}). The case of Lie groups of
polynomial volume growth and of Riemannian manifolds of nonnegative curvature
is studied in \cite{ALEXOLO,MAR} and the case of compact semisimple Lie
groups in \cite{CLERC2}.

To prove Theorem \ref{th0}, we split the Riesz means operator in the sum of
two convolution operators: $S_{R}^{z}=S_{R}^{z,0}+S_{R}^{z,\infty }$. The
\textit{local part} $S_{R}^{z,0}$ has a compactly supported kernel around
the origin, while the kernel of the \textit{part at infinity} $%
S_{R}^{z,\infty }$ is supported away from the origin. To treat the local
part, we follow the approach of \cite{ALEXOLO, PAPAGEO}. More precisely, we
express the kernel of $S_{R}^{z,0}$ via the heat kernel $p_{t}$ of $X$, and
we make use of its estimates. Let $-\Delta$ be the Laplace-Beltrami operator on $X$. Then, combining the with the fact that the wave
operator $\cos(t\sqrt{-\Delta-\|\rho\|^2})$ of $X$ propagates with finite speed, allows
us to prove that $S_{R}^{z,0}$ is continuous on $L^{p}(X)$ for all $p\geq 1$%
. To treat the part at infinity of the operator, we proceed as in \cite%
{LOMAanna}, and obtain estimates of its kernel by using the support
preserving property of the \textit{Abel transform}. 

This paper is organized as follows. In Section 2 we present the necessary
ingredients for our proofs. In Section 3 we deal with the local part
and the part at infinity, of the Riesz mean operator and we
prove Theorem \ref{th0}. In Section 4 we prove Theorem \ref{Th1} and we
deduce Theorem \ref{Th2}.

\section{Preliminaries}

In this section we recall some basic facts about symmetric spaces. For
details see for example \cite{AN, FOMAMA, HEL, LOMAjga}.

\subsection{Symmetric spaces}

Let $G$ be a semisimple Lie group, connected, noncompact, with finite center
and let $K$ be a maximal compact subgroup of $G$. We denote by $X$ the
noncompact symmetric space $G/K$. In the sequel we assume that $\operatorname{dim}%
X=n $. Denote by $\mathfrak{g}$ and $\mathfrak{k}$ the Lie algebras of $G$
and $K $. Let also $\mathfrak{p}$ be the subspace of $\mathfrak{g}$ which is
orthogonal to $\mathfrak{k}$ with respect to the Killing form. The Killing
form induces a $K$-invariant scalar product on $\mathfrak{p}$ and hence a $G$%
-invariant metric on $X$. Denote by $\Delta $ the Laplace-Beltrami operator
on $X$, by $d(.,.)$ the Riemannian distance and by $dx$ the associated
Riemannian measure on $X$. Denote by $\left\vert B\left( x,r\right)
\right\vert $ the volume of the ball $B\left( x,r\right) $, $x\in X$, $r>0$,
and recall that there is a $c>0$, such that  \begin{equation}\label{vol}\left\vert B\left( x,r\right)
\right\vert \leq cr^{n} \text{ for all } r\leq 1,\end{equation} \cite[p.117]{stromberg}.

Fix $\mathfrak{a} $ a maximal abelian subspace of $\mathfrak{p}$ and denote
by $\mathfrak{a}^{\ast }$ the real dual of $\mathfrak{a}$. If $\operatorname{dim}%
\mathfrak{a}=l$, we say that $X$ has rank $l$. We also say that $\alpha \in
\mathfrak{a}^{\ast }$ is a root vector, if the space
\begin{equation*}
\mathfrak{g}^{\alpha }=\left\{ X\in \mathfrak{g}:[H,X]=\alpha (H)X,\text{
for all }H\in \mathfrak{a}\right\} \neq \left\{ 0\right\} .
\end{equation*}

Let $A$ be the analytic subgroup of $G$ with Lie algebra $\mathfrak{a}.$ Let
$\mathfrak{a}_{+}\subset \mathfrak{a}$ be a positive Weyl chamber and let $%
\overline{\mathfrak{a}_{+}}$ be its closure. Set $A^{+}=\exp \mathfrak{a}%
_{+} $. Its closure in $G$ is $\overline{A_{+}}=\exp \overline{\mathfrak{a}
_{+}}$. We have the Cartan decomposition
\begin{equation}
G=K\overline{A_{+}}K=K\exp \overline{\mathfrak{a}_{+}}K.  \label{kak}
\end{equation}
Then, each element $x \in G $ is written uniquely as $x =k_1(\exp H)k_2$. We
set
\begin{equation}  \label{normG}
|x|=|H|, \; H \in \overline{\mathfrak{a}_{+}},
\end{equation}
the norm on $G$ \cite[p.2]{ANO}. Denote by $x_0 = eK$ a base point of $X$.
If $x,y\in X$, there are isometries $g,h \in G $ such that $x = gx_0$ and $y
= hx_0$. Because of the Cartan decomposition (\ref{kak}), there are $k,
k^{\prime}\in K$ and a unique $H \in \overline{\mathfrak{a}_{+}}$ with $%
g^{-1}h=k \exp H k^{\prime}$ . It follows that
\begin{equation*}
\ d(x, y)=|H|,
\end{equation*}
where $d(x,y)$ is the geodesic distance on $X$ \cite{Weber}.

Normalize the Haar measure $dk$ of $K$ such that $\int_{K}dk=1$. Then, from
the Cartan decomposition, it follows that
\begin{equation}  \label{haarmeas}
\int_{G}f(g)dg=\int_{K}dk_{1}\int_{{\mathfrak{a}_{+}}}{\delta (H)}%
dH\int_{K}f(k_{1}\exp (H)k_{2})dk_{2},
\end{equation}
where the modular function $\delta (H)$ satisfies the estimate
\begin{equation}  \label{modular}
\delta(H)\leq ce^{2\rho (H)} .
\end{equation}
We identify functions on $X = G/K$ with functions on $G$ which are $K$%
-invariant on the right, and hence bi-$K$-invariant functions on $G$ are
identified with functions on $X$, which are $K$-invariant on the left. Note
that if $f$ is $K$-bi-invariant, then by (\ref{haarmeas}),
\begin{equation}
\int_{G}f\left( g\right) dg=\int_{X}f\left( x\right) dx=\int_{\mathfrak{a}%
_+}f(\exp H)\delta (H)dH.  \label{rightinv}
\end{equation}

\subsection{The spherical Fourier transform}

Denote by $S(K\backslash G/K)$ the Schwartz space of $K$-bi-invariant
functions on $G$. For $f\in S(K\backslash G/K),$ the spherical Fourier
transform $\mathcal{H}$ is defined by
\begin{equation*}
\ \mathcal{H}f(\lambda )=\int_{G}f(x)\varphi _{\lambda }(x)\;dx,\quad
\lambda \in \mathfrak{a^{\ast }},
\end{equation*}
where $\varphi _{\lambda }$ is the elementary spherical function of index $%
\lambda$ on $G$. Note that from \cite{HEL} we have the following estimate
\begin{equation}  \label{phi0}
\varphi_0(\exp H) \leq c(1+|H|)^de^{-\rho(H)},
\end{equation}
where $d$ is the cardinality of the set of positive indivisible roots.

Let $S(\mathfrak{a^{\ast }})$ be the usual Schwartz space on $\mathfrak{\
a^{\ast}}$. Denote by $W$ the Weyl group associated to the root system of $(%
\mathfrak{g},\mathfrak{a})$ and denote by $S(\mathfrak{a^{\ast }})^{W}$ the
subspace of $W$-invariant functions in $S(\mathfrak{a^{\ast }})$. Then, by a
celebrated theorem of Harish-Chandra, $\mathcal{H}$ is an isomorphism
between $S(K\backslash G/K)$ and $S(\mathfrak{a^{\ast }})^W$ and its inverse
is given by
\begin{equation*}
\ (\mathcal{H}^{-1}f)(x)=c\int_{\mathfrak{a\ast }}f(\lambda )\varphi
_{-\lambda }(x)\frac{d\lambda }{|\mathbf{c}(\lambda )|^{2}},\quad x\in
G,\quad f\in S(\mathfrak{a^{\ast }})^{W},
\end{equation*}
where $\mathbf{c}(\lambda )$ is the Harish-Chandra function and $c$ is a normalizing constant independent of $f$, \cite[Theorem 7.5]{HEL}.

\subsection{The heat kernel on $X$}

 Set
\begin{equation*}
w_t(\lambda )=e^{-t(\Vert \lambda \Vert ^{2}+\Vert \rho \Vert
	^{2})},\quad t>0,\;\lambda \in \mathfrak{a^{\ast }},
\end{equation*} Then the heat kernel $p_{t}(x)$ of $X$ is given by $(\mathcal{H}^{-1}w_t)(x)$ \cite{ANJ}.

The heat kernel $p_t$ on symmetric spaces has been extensively studied, see for example \cite{ANJ,ANO}. Sharp estimates of the heat kernel have been obtained by Davies and Mandouvalos in \cite{DAVMAN} for the case of real hyperbolic space,
while Anker and Ji \cite{ANJ} and later Anker and Ostellari \cite{ANO},
generalized the results of \cite{DAVMAN} to all symmetric spaces of
noncompact type.

Denote by $\Sigma _{0}^{+}$ the set of positive indivisible roots $\alpha $ of $(\mathfrak{g}, \mathfrak{a})$
and by $m_{\alpha }$ the dimension of the root space $\mathfrak{g}^{\alpha }$. In \cite[Main Theorem]{ANO} it is proved the following sharp estimate:
\begin{align}
p_{t}(\exp {H})& \leq ct^{-n/2}\left( \underset{\alpha \in \Sigma _{0}^{+}}{	\prod }(1+\langle \alpha ,H\rangle )(1+t+\langle \alpha ,H\rangle )^{\frac{		m_{\alpha }+m_{2\alpha }}{2}-1}\right) \times \notag \\
& \times e^{-\left\Vert \rho \right\Vert ^{2}t-\langle \rho ,H\rangle
	-|H|^{2}/4t},\quad t>0, \;H\in \overline{\mathfrak{a}	_{+}},  \label{heat}
\end{align}where $n=$dim$X$.

From (\ref{heat}), we deduce the following crude estimate
\begin{equation}
p_{t}(\exp {H})\leq ct^{-n/2}e^{-|H| ^{2}/4t}, \quad
t>0, \;H\in \overline{\mathfrak{a}_{+}},  \label{ostellari}
\end{equation}which is sufficient for our purposes.

Note also that (\ref{ostellari}) yields the on-diagonal upper bound 
	\begin{equation}\label{diagonal} p_t(e)\leq ct^{-n/2}.
	\end{equation}   As it is shown in \cite[Lemma 3.1]{GRIG}, 
estimate (\ref{diagonal}) implies that there is an absolute constant  $D>0$, sufficiently large, such that for every $a>0$, there holds
\begin{equation}
\int_{d(x,x_{0})>a}p_{t}^{2}(x)dx\leq ct^{-n/2}e^{-a^{2}/Dt}.
\label{grigoryan}
\end{equation}
\section{Proof of Theorem 1}
	Let $\kappa _{R}^{z}$ be the kernel of the Riesz means operator. We start
	with a decomposition of $\kappa _{R}^{z}$:
	\begin{equation}
	\kappa _{R}^{z}=\zeta \kappa _{R}^{z}+(1-\zeta )\kappa _{R}^{z}:=\kappa
	_{R}^{z,0}+\kappa _{R}^{z,\infty },  \label{decomp}
	\end{equation}%
	where $\zeta \in C^{\infty }(K\backslash G/K)$ is a cut-off function such
	that
	\begin{equation}
	\zeta (x)=%
	\begin{cases}
	1, & \text{if }|x|\leq 1/2, \\
	0, & \text{if }|x|\geq 1.%
	\end{cases}
	\label{zita}
	\end{equation}%
	Denote by ${S}_{R}^{z,0}$ (resp. ${S}_{R}^{z,\infty }$) the convolution
	operator on $X$ with kernel $\kappa _{R}^{z,0}$ (resp. $\kappa
	_{R}^{z,\infty })$. 
\subsection{The local part}

We shall prove the following proposition.

\begin{proposition}\label{T0X} Assume that $\operatorname{Re}z >n/2$. Then the operator ${S}_{R}^{z,0}$
is bounded on $L^{p}(X)$, $1\leq p\leq \infty$, and $\|S_R^{z,
0}\|_{p\rightarrow p}\leq c(z)$, for some constant $c(z)>0$.
\end{proposition}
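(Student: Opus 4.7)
The plan is to show that $\kappa_R^{z,0} \in L^1(X)$ with norm bounded uniformly in $R$. Since $G$ is unimodular, Young's convolution inequality then immediately yields $\|S_R^{z,0}\|_{p\to p} \leq \|\kappa_R^{z,0}\|_1 \leq c(z)$ for every $p \in [1,\infty]$, which is the claim.

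A direct Plancherel estimate is insufficient here: a quick computation using the behavior of $|\mathbf{c}(\lambda)|^{-2}$ gives $\|\kappa_R^z\|_2^2 \sim R^{n/2}$, which blows up with $R$. To do better, I would follow the strategy outlined in the introduction and express the kernel through the heat semigroup. Using a subordination or Mellin-type identity, one writes the multiplier as a superposition of heat multipliers,
\[
s_R^z(\lambda) = \int_0^{\infty} \Psi_R^z(t)\, w_t(\lambda)\, dt,
\]
for an explicit density $\Psi_R^z$ whose singularity near $t=0$ is of order $t^{-\operatorname{Re} z - 1}$ (after proper rescaling in $R$). Applying the Harish-Chandra inversion and using $\mathcal{H}^{-1}(w_t) = p_t$ transfers this to the kernel level:
\[
\kappa_R^z(x) = \int_0^{\infty} \Psi_R^z(t)\, p_t(x)\, dt.
\]
Substituting the pointwise heat-kernel bound \eqref{ostellari} yields
\[
|\kappa_R^z(x)| \leq c \int_0^{\infty} |\Psi_R^z(t)|\, t^{-n/2} e^{-|x|^2/(4t)}\, dt.
\]

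Integrating this pointwise bound over the support of $\zeta$, which lies inside $\{|x|\leq 1\}$, and using the local volume estimate \eqref{vol} to control $\int_{|x|\leq 1} e^{-|x|^2/(4t)}\,dx \leq c\,\min(1, t^{n/2})$, one arrives at
\[
\|\kappa_R^{z,0}\|_{1} \leq c \int_0^{\infty} |\Psi_R^z(t)|\, \min(1, t^{-n/2})\, dt.
\]
The hypothesis $\operatorname{Re} z > n/2$ enters exactly at this step: it is the condition that tames the singularity of $\Psi_R^z$ near the origin against the factor $t^{-n/2}$ coming from the on-diagonal heat bound, making the right-hand side finite and, crucially, independent of $R$.

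The main obstacle will be producing a workable density $\Psi_R^z$ and verifying the uniform bound. This is where the finite propagation speed of the wave operator $\cos(t\sqrt{-\Delta-\|\rho\|^2})$ plays its role: representing $s_R^z(\sqrt{L})$ via its (even) Fourier decomposition shows that the physical-space localization imposed by the cutoff $\zeta$ corresponds to restricting the wave propagator to a bounded time interval, and this is what one would use to justify the integral representation above and reduce the problem to the elementary estimate just described. Once these two ingredients are assembled, no further appeal to properties of the symmetric space is needed to conclude.
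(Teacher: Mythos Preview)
Your central step does not go through: the identity $s_R^z(\lambda)=\int_0^\infty \Psi_R^z(t)\,w_t(\lambda)\,dt$ cannot hold with any reasonable density. The right-hand side, as a Laplace transform in the variable $u=\|\lambda\|^2+\|\rho\|^2$, is real-analytic on $(0,\infty)$ whenever $\Psi_R^z$ is an integrable function or even a finite measure, whereas $s_R^z$ vanishes identically for $u>R$. So no such $\Psi_R^z$ exists, and every inequality downstream of it is vacuous. There is also an internal inconsistency in the heuristic: with the singularity profile you posit, $|\Psi_R^z(t)|\sim t^{-\operatorname{Re}z-1}$ near $t=0$, the factor $\min(1,t^{-n/2})$ equals $1$ for small $t$, so the integrand behaves like $t^{-\operatorname{Re}z-1}$ there and diverges for every $\operatorname{Re}z>0$; increasing $\operatorname{Re}z$ makes matters worse, not better, and the threshold $n/2$ never appears.

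The paper does use the heat kernel, but not through a superposition in $t$. It factors once, $s_R^z(\lambda)=h_r^z(\lambda)\,w_{1/r^2}(\lambda)$ with $r=\sqrt R$, so that $\kappa_R^z=h_r^z(\sqrt{-\Delta-\|\rho\|^2})\,p_{1/r^2}$, and then performs a dyadic decomposition $h_r^z=\sum_j h_{j,r}^z$ near the edge of the support. The finite propagation speed of $\cos(t\sqrt{-\Delta-\|\rho\|^2})$ enters on each annulus $A_p\subset B(0,1)$: writing $h_{j,r}^z(\sqrt L)=\int\hat h_{j,r}^z(t)\cos(t\sqrt L)\,dt$ and splitting $p_{1/r^2}$ according to $|y|\le 2^{p-1}$ or not, the support property forces $|t|\ge 2^{p-1}$ in the first piece, and the $k$-fold decay of $\hat h_{j,r}^z$ at infinity then yields $\|\kappa_{j,r}^z\|_{L^1(A_p)}\le c\,2^{-(p+i)(k-n/2)}2^{-j(\operatorname{Re}z-n/2)}$. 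Summing over $p$ and $j$ is where $\operatorname{Re}z>n/2$ is used. This gives the $L^\infty\!\to\!L^\infty$ bound; a separate Plancherel argument gives $L^2\!\to\!L^2$, and interpolation plus duality finishes. Your last paragraph gestures at the wave propagator, but the mechanism is not the one you describe: the localization does not ``restrict the wave propagator to a bounded time interval''; it restricts it to \emph{large} times on each annulus, which is what converts smoothness of $h_{j,r}^z$ into spatial decay.
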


The proof is lengthy and it will be given in several steps. First, we
shall express the kernel $\kappa _{R}^{z}$ in terms of the heat kernel $%
p_{t} $ of $X$. Then, we shall use the heat kernel estimates (\ref{ostellari}%
) to prove that $\kappa _{R}^{z}$ is integrable in the unit ball $B(0,1)$ of
$X$. This implies that $S_{R}^{z,0}$ is bounded on $L^{\infty }(X)$. We then prove that $S_{R}^{z,0}$ is bounded on $L^2(X)$, and an
interpolation argument between $L^{\infty}(X)$ and $L^2(X)$
allows us to conclude.

To express the kernel $\kappa _{R}^{z}$ in terms of $p_{t}$, we follow \cite%
{ALEXOLO} and we write
\begin{equation}\label{Srz}
s_{R}^{z}(\lambda )=s_{R}^{z}(\Vert \lambda \Vert ) =\left( 1-\frac{\Vert
\lambda \Vert ^{2}+\Vert \rho \Vert ^{2}}{R}\right) _{+}^{z}.  
\end{equation}  
Set $r=\sqrt{R}$, $\xi =\Vert \lambda \Vert $ and consider the function
\begin{equation}
h_{r}^{z}(\lambda )=h_{r}^{z}(\xi ):=\left( 1-\left( \frac{\sqrt{\xi ^{2}+\Vert \rho
\Vert ^{2}}}{r} \right)^2\right) _{+}^{z}e^{(\sqrt{\xi ^{2}+\Vert \rho \Vert ^{2}}/{%
r})^{2}}.  \label{hr,z}
\end{equation}%

Then, from (\ref{Srz}) and (\ref{hr,z})
we have
\begin{equation}
s_r^z(\lambda)=h_R^z(\lambda)e^{-(\|\lambda\|^2+\| \rho \|^{2})/r^2},
\end{equation}
and thus
\begin{equation}
s_{R}^{z}(\sqrt{-\Delta-\|\rho\|^2} )=h_{r}^{z}(\sqrt{-\Delta-\|\rho\|^2})e^{-1/{r^{2}}(-\Delta )}. 
\label{heatexpr}
\end{equation} 

Next, we recall the construction of the partition of unity of %
\cite[p.213]{ALEXOLO} we shall use for the splitting of the
operator $s_{R}^{z}(-\Delta )$. For that we set $\psi (\xi )=e^{-1/\xi ^{2}}$%
, $\xi \geq 0$, and $\psi _{1}(\xi )=\psi (\xi )\psi (1-\xi )$. Then $\psi
_{1}\in C^{\infty }(\mathbb{R}) $ and $\operatorname{supp}\psi_1=[0,1]$. Set also $\phi (\xi )=\psi
_{1}(\xi +\frac{5}{4})$, and
\begin{equation*}
\phi _{j}(\xi )=\phi (2^{j}(\xi -1)),\text{ }j\in \mathbb{N}.
\end{equation*}%
Then $\phi _{j}(\xi )$ is a $C^{\infty }$ function with support in $%
I_{j}=[1-5/2^{j+2},1-1/2^{j+2}]$. The functions
\begin{equation*}
\chi _{j}(\xi )=\frac{\phi _{j}(\xi )}{\sum_{i\geq 0}\phi _{i}(\xi )},\text{
}
\end{equation*}%
form the required partition of unity.

Set
\begin{equation*}
\chi _{j,r}(\xi )=\chi _{j}\left( (\xi /r)^{2}\right) ,\text{ }
\end{equation*}%
and%
\begin{equation*}
h_{j,r}^z(\xi ):=h_{R}^{z}(\xi )\chi _{j,r}(\xi ).
\end{equation*}

Consider the operator
\begin{equation}
T_{j,r}^z:=s_{j,r}^z(\sqrt{-\Delta-\|\rho\|^2})=h_{j,r}^z(\sqrt{-\Delta-\|\rho\|^2 })e^{-1/r^{2}(-\Delta )}.
\label{Tj,r} 
\end{equation}
Note that by (\ref{Tj,r}) and (\ref{heatexpr}),
\begin{align}
\sum_{j\in \mathbb{N}}T_{j,r}^z&=\sum_{j\in \mathbb{N}}h_{r,j}^z(\sqrt{-\Delta-\|\rho\|^2 })e^{-1/r^{2}(-\Delta
)}\notag \\&=h_{r}^z(\sqrt{-\Delta-\|\rho\|^2 })e^{-1/r^{2}(-\Delta
)}=s_{R}^{z}(\sqrt{-\Delta-\|\rho\|^2} ).  \label{sum}
\end{align}
Denote by $\kappa _{j,r}^z$ the kernel of the operator $T_{j,r}^z$. Then, (\ref%
{Tj,r}) implies that
\begin{align}
\kappa _{j,r}^z(x)&=T_{j,r}^z\delta _{x_0}\left( x\right) =h_{j,r}^z(\sqrt{-\Delta-\|\rho\|^2}%
)e^{-1/r^{2}(-\Delta )}\delta _{x_0}\left( x\right) \notag \\ &=h_{j,r}^z(\sqrt{-\Delta-\|\rho\|^2}
)p_{1/r^{2}}(x),  \label{kj,r}
\end{align}
where $x_0$ is the basepoint on $X$.
Consequently, (\ref{sum}) and (\ref{kj,r}) imply that
\begin{equation}
\kappa _{R}^{z}=\sum\limits_{j\in \mathbb{N}}\kappa^z _{j,r}.  \label{sumk}
\end{equation}%
So, to estimate the kernel $\kappa _{R}^{z}$, it suffices to estimate the
kernels $\kappa _{j,r}^z$, which by (\ref{kj,r}) are expressed in terms of the
heat kernel $p_{t}$ of $X$ and the functions $h_{j,r}^z$. For that, we shall
first recall from \cite[p.214]{ALEXOLO} some properties of
the functions $h_{j,r}^z$ we shall use in the sequel.

There is a $c>0$ such that
\begin{equation}
|\operatorname{supp}h_{j,r}^z|\leq cr2^{-j},  \label{alexo6}
\end{equation}%
\cite[p.214]{ALEXOLO}. Note that the functions $\chi _{j}$,
as well as $h_{j,r}^z$ are radial and thus invariant by the Weyl group \cite[p.612]{AN}.

Note also that for every $k\in \mathbb{N}$, there is a $c_{k}>0$, such that for every $r>0$, it holds
\begin{equation}
\Vert\chi_{j,r}^{(k)}\Vert _{\infty }\leq c_{k}r^{-k}2^{kj},\quad \Vert
{h_{j,r}^z}^{(k)}\Vert _{\infty }\leq c_{k}r^{-k}2^{-(\operatorname{Re}z-k)j}.
\label{alexo7}
\end{equation}%
As it is mentioned in \cite[p.214]{ALEXOLO}, the estimates (\ref{alexo6}) and (\ref{alexo7}) imply that for every $k\in \mathbb{N}$,
there is a $c_{k}>0$ such that
\begin{equation}
\int_{|t|\geq s}|\hat{h}_{j,r}^z(t)|dt\leq c_{k}s^{-k}r^{-k}2^{(k-\operatorname{Re}%
z)j},\;s>0,  \label{hhat}
\end{equation}%
where $\hat{h}_{j,r}^z$ is the euclidean Fourier transform of $h_{j,r}^z$.

\begin{lemma}
\label{L1ball} Let $\kappa _{R}^{z}$ be the kernel of the Riesz mean
operator $S_{R}^{z}$. Then, there is $c>0$, independent of $R$, such that for $%
\operatorname{Re}z>n/2$,
\begin{equation*}
\Vert \kappa _{R}^{z}\Vert _{L^{1}(B(0,1))}\leq c.
\end{equation*}
\end{lemma}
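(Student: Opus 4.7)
The plan is to leverage the decomposition $\kappa_R^z = \sum_{j \geq 0} \kappa_{j,r}^z$ from (\ref{sumk}) and prove that each summand satisfies $\|\kappa_{j,r}^z\|_{L^1(B(0,1))} \leq c\, 2^{-j \delta}$ for some $\delta > 0$ when $\operatorname{Re} z > n/2$, so that summing in $j$ produces the desired $R$-independent bound.

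For each fixed $j$, I start from the wave-equation representation
$$
\kappa_{j,r}^z(x) = h_{j,r}^z(P)\, p_{1/r^2}(x) = c \int_{\mathbb{R}} \widehat{h}_{j,r}^z(t)\, \cos(tP) p_{1/r^2}(x)\, dt,
$$
with $P = \sqrt{-\Delta-\|\rho\|^2}$, and split the time integral at a threshold $|t| = \sigma = \sigma(j,r)$ to be optimised, writing $\kappa_{j,r}^z = A_{j,\sigma} + B_{j,\sigma}$. For the near piece $A_{j,\sigma}$, the finite-propagation speed of $\cos(tP)$ implies that, when $x \in B(x_0,1)$, $A_{j,\sigma}(x)$ depends only on the values of $p_{1/r^2}$ inside $B(x_0,\sigma+1)$. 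Thus one may replace $p_{1/r^2}$ by $\chi\, p_{1/r^2}$ with $\chi$ a cutoff supported in $B(x_0,\sigma+1)$, and then estimate $\|A_{j,\sigma}\|_{L^2}$ by Plancherel, using the truncated multiplier bound $\|m_{j,\sigma}\|_\infty$ (derivable from (\ref{alexo7}) and (\ref{hhat})) together with the on-diagonal heat-kernel bound (\ref{diagonal}) and its tail version (\ref{grigoryan}) to control $\|\chi p_{1/r^2}\|_{L^2}$.

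For the far piece $B_{j,\sigma}$, the Fourier decay estimate (\ref{hhat}) with $k$ chosen sufficiently large yields a rapidly decreasing bound $s^{-k} r^{-k} 2^{(k - \operatorname{Re} z)j}$, which combined with the heat-kernel $L^2$ estimate makes the contribution negligible. Cauchy--Schwarz together with the Euclidean-type volume bound (\ref{vol}) for $B(x_0,1)$ converts these $L^2$ estimates into the $L^1(B(x_0,1))$ bound; balancing the two pieces by choosing $\sigma$ appropriately and summing the resulting geometric series over $j$ gives the uniform $L^1$ bound exactly when $\operatorname{Re} z > n/2$.

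The main obstacle is controlling the $r$-dependence: both the heat kernel $p_{1/r^2}$ (with $\|p_{1/r^2}\|_{L^2} \lesssim r^{n/2}$ and $\|p_{1/r^2}\|_\infty \lesssim r^n$) and the Plancherel density $|\mathbf{c}(\lambda)|^{-2}$ carry polynomial factors in $r$, so a direct application of Plancherel plus Cauchy--Schwarz produces bounds that blow up as $r \to \infty$. The point of the combined use of finite propagation and the rapid decay of $\widehat{h}_{j,r}^z$ is to force $\kappa_{j,r}^z$ to be effectively concentrated in a small neighbourhood of $x_0$ whose volume exactly cancels the $r^{n/2}$ growth; the threshold $\operatorname{Re} z = n/2$ appears as the sharp boundary of this cancellation, reflecting the fact that (unlike in the rank-one or Euclidean settings) we have no explicit Abel-transform formula to squeeze out the extra $1/2$.
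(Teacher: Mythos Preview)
Your overall strategy and toolkit match the paper's, but the specific splitting you propose --- a single time threshold $\sigma = \sigma(j,r)$, with both pieces then estimated on all of $B(0,1)$ via Cauchy--Schwarz --- does not kill the $r^{n/2}$ factor you yourself identify as the obstacle. For the near piece $A_{j,\sigma}$, replacing $p_{1/r^2}$ by $\chi p_{1/r^2}$ with $\chi$ supported in $B(x_0,\sigma+1)$ gains nothing: since $\sigma+1 \geq 1 \gg 1/r$, the cutoff does not see the heat-kernel concentration, so $\|\chi p_{1/r^2}\|_{L^2}$ is still of order $r^{n/2}$, and Cauchy--Schwarz over $B(0,1)$ gives $\|A_{j,\sigma}\|_{L^1(B(0,1))} \leq c\,2^{-j\operatorname{Re}z} r^{n/2}$. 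The far piece carries the same $r^{n/2}$ through $\|p_{1/r^2}\|_{L^2}$, and one checks that no single choice of $\sigma$ and $k$ makes both pieces uniformly bounded in $r$ while keeping the $j$-sum convergent under only $\operatorname{Re}z > n/2$.

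The paper's fix is to decompose $B(0,1)$ \emph{spatially}: $B(0,1) \subset B(0,1/r) \cup \bigcup_{p=-i}^{0} A_p$ with $A_p = \{2^p \leq |x| \leq 2^{p+1}\}$ and $2^{i-1} < r \leq 2^i$. On the tiny ball $B(0,1/r)$, plain Cauchy--Schwarz already works because $|B(0,1/r)|^{1/2} \leq c\, r^{-n/2}$ exactly cancels the heat factor. On each annulus $A_p$ one splits $p_{1/r^2} = p_{1/r^2}\mathbf{1}_{\{|y|\leq 2^{p-1}\}} + p_{1/r^2}\mathbf{1}_{\{|y|>2^{p-1}\}}$; for the first summand finite propagation forces $|t| \geq 2^{p-1}$, so (\ref{hhat}) applies with $s = 2^{p-1}$, while the second is handled by the Gaussian tail (\ref{grigoryan}). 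The annulus-dependent threshold $2^{p-1}$ is exactly what your single $\sigma$ is missing; it produces $\|\kappa_{j,r}^z\|_{L^1(A_p)} \leq c\, 2^{-(p+i)(k-n/2)} 2^{-j(\operatorname{Re}z - n/2)}$, summable in both $p$ and $j$ precisely when $\operatorname{Re}z > n/2$.
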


\begin{proof}
For the proof we shall consider different cases. Recall that $R\geq \Vert
\rho \Vert ^{2}$\textit{.}

\textit{Case 1: }$\Vert \rho \Vert ^{2}\leq R\leq \Vert \rho \Vert ^{2}+1$.

Combining (\ref{ostellari}) and the heat semigroup property, we get that
\begin{align}
\Vert p_{t}\Vert _{L^{2}(X)}& =\left( \int_{X}p_{t}(x,y)p_{t}(y,x)dy\right)
^{1/2}  \label{L2heat} \\
& \leq p_{2t}(x,x)^{1/2}\leq ct^{-n/4}.  \notag
\end{align}%
Thus, using (\ref{alexo7}), (\ref{kj,r}), (\ref{L2heat}) and  (\ref{vol}) we have
\begin{align}
\Vert \kappa^z_{j,r}\Vert _{L^{1}(B(0,1))}& \leq |B(0,1)|^{1/2}\Vert \kappa
_{j,r}^z\Vert _{L^{2}(X)}  \notag \\
& \leq c\Vert h_{j,r}^z(\sqrt{-\Delta-\|\rho\|^2})\Vert _{L^{2}\rightarrow L^{2}}\Vert
p_{1/r^{2}}\Vert _{L^{2}(X)}   \notag \\
& \leq c\Vert h_{j,r}^z\Vert _{\infty }(1/r^{2})^{-n/4}  \label{case1} \\
& \leq c(n,\Vert \rho \Vert )2^{-j\operatorname{Re}z}.  \notag
\end{align}%
So,
\begin{equation*}
\Vert \kappa _{R}^{z}\Vert _{L^{1}(B(0,1))}\leq \sum_{j\in \mathbb{N}}\Vert
\kappa^z_{j,r}\Vert _{L^{1}(B(0,1))}\leq c\sum_{j\in \mathbb{N}}2^{-j\operatorname{Re}%
z}\leq c,
\end{equation*}
since $\operatorname{Re}z>0$.

\textit{Case 2: }$R\geq \Vert \rho \Vert ^{2}+1.$

Recall that $r=\sqrt{R}$. So, the ball $B(0,1/r)$ is contained in the unit
ball. Next, let $i\geq 0$ be such that $2^{i-1}<r\leq 2^{i}$ and consider
the annulus $A_{p}=\{x\in X:2^{p}\leq |x|\leq 2^{p+1}\}$, with $p \geq -i$.
We write
\begin{equation*}
B(0,1)\subseteq B(0,1/r)\bigcup_{p=-i}^{0}A_{p}.
\end{equation*}

Applying (\ref{alexo7}), (\ref{kj,r}), (\ref{L2heat}) and (\ref{vol}) and  proceeding as
in Case 1, we have
\begin{align*}
	\Vert \kappa^z_{j,r}\Vert _{L^{1}(B(0,1/r))}& \leq |B(0,1/r)|^{1/2}\Vert \kappa^z
	_{j,r}\Vert_{L^{2}(X)}  \notag \\
	& \leq c_nr^{-n/2}\Vert h_{j,r}^z(\sqrt{-\Delta-\|\rho\|^2})\Vert _{L^{2}\rightarrow L^{2}}\Vert
	p_{1/r^{2}}\Vert _{L^{2}(X)}  \notag \\
	& \leq c_n r^{-n/2}\Vert h_{j,r}^z\Vert _{\infty }(1/r^{2})^{-n/4}  \\
	&=c_n\Vert h_{j,r}^z\Vert _{\infty }\\
	& \leq c_n2^{-j\operatorname{Re}z},  \notag
	\end{align*}
that is
\begin{equation}
\Vert \kappa _{j,r}^z\Vert _{L^{1}(B(0,1/r))}\leq c2^{-j\operatorname{Re}z}.
\label{B1/r}
\end{equation}

So, to finish the proof of the lemma it remains to prove estimates of the
kernels $\kappa _{j,r}^z$ on the annulus $A_{p}$. For that, we shall use the
fact that the kernel $G_{t}(x,y)$, $x,y\in X$, of the wave operator $\cos (t\sqrt{-\Delta-\|\rho\|^2})$, propagates with finite speed \cite[p.19]{TAYLOR2}, that is
\begin{equation}
\operatorname{supp}(G_{t})\subset \{(x,y):\;d(x,y)\leq |t|\}.  \label{finitespeed}
\end{equation}
As observed by the authors, \cite[pp.39-40]{TAYLOR2}, we may use the following formula for even functions $f(\lambda)$:
\begin{equation}\label{fourier}f(\sqrt{-\Delta-\|\rho\|^2})=\frac{1}{2\pi}\int_{-\infty}^{+\infty}\hat{f}(t)\cos( t\sqrt{-\Delta-\|\rho\|^2})dt.
\end{equation} 
Since $h_{j,r}^z$ is even,  by (\ref{fourier}) we have
{\begin{align}
\kappa _{j,r}^z(x)& =[h_{j,r}^z(\sqrt{-\Delta-\|\rho\|^2})p_{r^{-2}}(\cdot )](x)  
\label{integk} \\
& =(2\pi )^{-1/2}\int_{-\infty }^{+\infty }\hat{h}_{j,r}^z(t)[\cos t (\sqrt{-\Delta-\|\rho\|^2})p_{r^{-2}}(\cdot )]( x)dt.  \notag
\end{align}%
So, if $x\in A_{p}$, then
\begin{align}
\kappa _{j,r}^z( x)& =(2\pi )^{-1/2}\int_{-\infty }^{+\infty }\hat{h}%
_{j,r}^z(t)[\cos (t\sqrt{-\Delta-\|\rho\|^2})p_{r^{-2}}(\cdot )\mathbf{1}_{\{| y|\leq
2^{p-1}\}}]( x)dt  \notag \\
& +(2\pi )^{-1/2}\int_{-\infty }^{+\infty }\hat{h}_{j,r}^z(t)[\cos (t\sqrt{-\Delta-\|\rho\|^2})p_{r^{-2}}(\cdot )\mathbf{1}_{\{| y|>2^{p-1}\}}]( x)dt.  \notag \\
& =(2\pi )^{-1/2}\int_{|t|\geq 2^{p-1}}\hat{h}_{j,r}^z(t)[\cos (t\sqrt{-\Delta-\|\rho\|^2})
p_{r^{-2}}(\cdot )\mathbf{1}_{_{\{| y|\leq 2^{p-1}\}}}]( x)dt  \notag \\
& +(2\pi )^{-1/2}\int_{-\infty }^{+\infty }\hat{h}_{j,r}^z(t)[\cos (t\sqrt{-\Delta-\|\rho\|^2})p_{r^{-2}}(\cdot )\mathbf{1}_{\{| y|>2^{p-1}\}}]( x)dt, \label{kjr}
\end{align}}
where in the last equality we have used the finite propagation speed of the
wave operator:
if $|y|\leq 2^{p-1}$ and $|x|\geq 2^p$, then
(\ref{finitespeed}) implies that $|t|\geq 2^{p-1}$.

{So, using (\ref{integk}), equality (\ref{kjr}) rewrites
	\begin{align}\label{k_j,r}
	\kappa _{j,r}^z( x)& =(2\pi )^{-1/2}\int_{|t|\geq 2^{p-1}}\hat{h}_{j,r}^z(t)[\cos (t\sqrt{-\Delta-\|\rho\|^2})
	p_{r^{-2}}(\cdot )\mathbf{1}_{_{\{| y|\leq 2^{p-1}\}}}]( x)dt  \notag \\
	&+{h}_{j,r}^z(\sqrt{-\Delta-\|\rho\|^2})[p_{r^{-2}}(\cdot )\mathbf{1}_{\{| y|>2^{p-1}\}}]( x).
	\end{align} Applying Cauchy-Schwarz to (\ref{k_j,r}) and using the fact that $\Vert \cos t\sqrt{-\Delta }\Vert
_{2\rightarrow 2}\leq 1$, as well as the spectral theorem, we obtain }
\begin{align}
\Vert \kappa _{j,r}^z\Vert _{L^{1}(A_{p})}& \leq c|A_{p}|^{1/2}\int_{|t|\geq
2^{p-1}}|\hat{h}_{j,r}^z(t)|\Vert p_{r^{-2}}\Vert _{2}dt  \label{L1norm} \\
& +c|A_{p}|^{1/2}\Vert h_{j,r}^z\Vert _{\infty }\Vert p_{r^{-2}}\mathbf{1}%
_{\{|y|>2^{p-1}\}}\Vert _{2}:=I_{1}+I_{2}.  \notag
\end{align}

From (\ref{alexo7}), (\ref{grigoryan}) and the fact that $2^{i-1}<r\leq
2^{i} $, it follows that
\begin{align*}
I_{2}& \leq c2^{p/2}2^{-j\operatorname{Re}z}(r^{-2})^{-n/4}e^{-2^{p-1}/2Dr^{-2}} \\
& \leq c2^{-j\operatorname{Re}z}2^{p/2}r^{n/2}e^{-2^{p}r^{2}/4D} \\
& \leq c2^{-j\operatorname{Re}z}2^{(p+i)n/2}e^{-D_1 2^{p+i}}.
\end{align*}%
Using the elementary estimate
\begin{equation*}
e^{-D_1x}x^{n/2}\leq c_kx^{-k},\text{ for all }x>1,\;k\in \mathbb{N},
\end{equation*}%
we obtain
\begin{equation}
I_{2}\leq 2^{-j\operatorname{Re}z}2^{-k(p+i)}.  \label{I1}
\end{equation}%
Also, from (\ref{L2heat}) we have that
\begin{equation*}
I_{1}\leq c2^{p/2}(r^{-2})^{-n/4}\int_{|t|\geq 2^{p-1}}|\hat{h}_{j,r}^z(t)|dt.
\end{equation*}%
Then, applying (\ref{hhat}) for $k>n/2$, we obtain
\begin{align}
I_{1}& \leq c_{n}2^{(p+i)n/2}2^{-pk}r^{-k}2^{(k-\operatorname{Re}z)j}  \notag
\label{I2} \\
& \leq c2^{-(p+i)(k-n/2)}2^{-j(\operatorname{Re}z-n/2)}.
\end{align}%
Finally, using (\ref{I1}) and (\ref{I2}), (\ref{L1norm}) implies that
\begin{equation}
\Vert \kappa _{j,r}^z\Vert _{L^{1}(A_{p})}\leq c2^{-(p+i)(k-n/2)}2^{-j(\operatorname{Re%
}z-n/2)}.  \label{L1Ap}
\end{equation}%
\textit{End of proof of Lemma \ref{L1ball}.} It follows from (\ref{B1/r})
and (\ref{L1Ap}) that
\begin{align}
\Vert \kappa _{j,r}^z\Vert _{L^{1}(B(0,1))}& \leq c2^{-j\operatorname{Re}%
z}+c\sum_{p=-i}^{0}2^{-(p+i)(k-n/2)}2^{-j(\operatorname{Re}z-n/2)} \\
& \leq c2^{-j(\operatorname{Re}z-n/2)}.  \notag
\end{align}%
So, for $\operatorname{Re}z>n/2$,
\begin{align*}
\Vert \kappa _{R}^{z}\Vert _{L^{1}(B(0,1))} &\leq c\sum_{j\geq 0}\Vert
\kappa _{j,r}^z\Vert _{L^{1}(B(0,1))} \\
&\leq c\sum_{j\geq 0}2^{-j(\operatorname{Re}z-n/2)}\leq c.
\end{align*}
\end{proof}

\begin{lemma}
$S_{R}^{z,0}$ is bounded on $L^{2}(X)$.
\end{lemma}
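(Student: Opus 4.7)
The plan is to reduce this directly to Lemma \ref{L1ball} via Young's convolution inequality on the group $G$. The previous lemma already does all the heavy lifting: it controls the $L^{1}$ mass of $\kappa_{R}^{z}$ on the unit ball, and this is exactly the quantity one needs to convert into an $L^{2}\to L^{2}$ bound for the truncated convolution operator.

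More precisely, by the definition (\ref{zita}) of $\zeta$, the kernel $\kappa_{R}^{z,0}=\zeta\kappa_{R}^{z}$ is supported in the closed unit ball $B(0,1)$, with $|\zeta|\le 1$. Under the standing hypothesis $\operatorname{Re}z>n/2$ of Proposition \ref{T0X}, Lemma \ref{L1ball} then gives
\begin{equation*}
\|\kappa_{R}^{z,0}\|_{L^{1}(G)}\;\le\;\|\kappa_{R}^{z}\|_{L^{1}(B(0,1))}\;\le\;c,
\end{equation*}
with $c$ independent of $R$. Since $\kappa_{R}^{z,0}$ is bi-$K$-invariant, the convolution $f\mapsto f\ast\kappa_{R}^{z,0}$ preserves right-$K$-invariance and so descends to an operator on $X=G/K$. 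Applying Young's inequality on $G$ with exponents $(p,q,r)=(2,1,2)$ (which satisfy $\frac{1}{2}+\frac{1}{1}=1+\frac{1}{2}$) yields
\begin{equation*}
\|S_{R}^{z,0}f\|_{L^{2}(X)}\;=\;\|f\ast\kappa_{R}^{z,0}\|_{L^{2}(G)}\;\le\;\|f\|_{L^{2}(G)}\,\|\kappa_{R}^{z,0}\|_{L^{1}(G)}\;\le\;c\,\|f\|_{L^{2}(X)}.
\end{equation*}

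I do not foresee any real obstacle here: all of the analytic work, in particular the interplay between the heat kernel representation (\ref{kj,r}), the symbol bounds (\ref{alexo7}), the finite propagation speed of the wave operator, and the Gaussian off-diagonal estimate (\ref{grigoryan}), has already been assembled to prove Lemma \ref{L1ball}. In fact the same Young's argument gives boundedness on every $L^{p}(X)$ with $1\le p\le\infty$; the authors most likely single out $p=2$ because it serves as the natural pivot endpoint for the forthcoming interpolation used to prove Proposition \ref{T0X} in full.
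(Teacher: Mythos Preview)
Your argument is correct, but the paper proves the lemma by a different, Fourier-analytic route that is independent of Lemma \ref{L1ball}. The authors write $T_{j,r}^{z,0}=\ast(\zeta\kappa_{j,r}^{z})$, take the spherical Fourier transform, and use Plancherel together with the convolution identity $\mathcal{H}(\zeta\kappa_{j,r}^{z})=\mathcal{H}(\zeta)\ast s_{j,r}^{z}$ to get
\[
\|T_{j,r}^{z,0}\|_{L^{2}\to L^{2}}\le \|\mathcal{H}(\zeta)\|_{L^{1}(\mathfrak{a}^{\ast})}\,\|s_{j,r}^{z}\|_{L^{\infty}(\mathfrak{a}^{\ast})}\le c(\zeta)\,2^{-j\operatorname{Re}z},
\]
the last step from (\ref{alexo7}); summing over $j$ gives the bound for any $\operatorname{Re}z>0$.

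The trade-off: your Young's inequality argument is shorter and, as you note, already delivers $L^{p}\to L^{p}$ for every $p\in[1,\infty]$, rendering the paper's subsequent interpolation step redundant---but it needs $\operatorname{Re}z>n/2$ because it rests on Lemma \ref{L1ball}. The paper's Plancherel argument works down to $\operatorname{Re}z>0$ and does not invoke the heat-kernel/finite-propagation machinery at all; in the present paper that extra range is not exploited in Proposition \ref{T0X}, but it keeps the $L^{2}$ endpoint logically separate from the $L^{1}$ kernel estimate.
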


\begin{proof}
Set
\begin{equation}
\kappa _{j,r}^{z,0}=\zeta \kappa _{j,r}^z,\;T_{j,r}^{z,0}=\ast \kappa _{j}^{z,0}%
\text{ and }s_{j,r}^{z,0}=\mathcal{H}(\kappa _{j,r}^{z,0}),  \label{cutoff}
\end{equation}%
where $\zeta $ is the cut-off function given in (\ref{zita}).

By Plancherel theorem and using (\ref{cutoff}), we get that
\begin{align}
\Vert T_{j,r}^{z,0}\Vert _{L^{2}\rightarrow L^{2}}& \leq \Vert
s_{j,r}^{z,0}\Vert _{L^{\infty }(\mathfrak{a^{\ast }})}=\Vert \mathcal{H}%
(\kappa _{j,r}^{z,0})\Vert _{L^{\infty }(\mathfrak{a^{\ast }})}  \notag \\
& =\Vert \mathcal{H}(\zeta \kappa _{j,r}^z)\Vert _{L^{\infty }(\mathfrak{%
a^{\ast }})}=\Vert \mathcal{H}(\zeta )\ast \mathcal{H}(\kappa _{j,r}^z)\Vert
_{L^{\infty }(\mathfrak{a^{\ast }})}  \label{tj02} \\
& \leq \Vert \mathcal{H}(\zeta )\Vert _{L^{1}(\mathfrak{a^{\ast }})}\Vert
s_{j,r}^z\Vert _{L^{\infty }(\mathfrak{a^{\ast }})}.  \notag
\end{align}%
But $\zeta \in S(K\backslash G/K)$. Therefore, its spherical Fourier
transform $\mathcal{H}(\zeta )$, belongs in $S(\mathfrak{a^{\ast }}%
)^{W}\subset L^{1}(\mathfrak{a^{\ast }})$, (see Section 2). So,
\begin{equation*}
\Vert \mathcal{H}(\zeta )\Vert _{L^{1}(\mathfrak{a^{\ast }})}\leq c(\zeta
)<\infty .
\end{equation*}%
From (\ref{tj02}), (\ref{Tj,r}) and (\ref{alexo7}) it follows that
\begin{align}
\Vert T_{j,r}^{z,0}\Vert_{L^{2}\rightarrow L^{2}}&\leq c(\zeta)\Vert
s_{j,r}^z\Vert _{L^{\infty }(\mathfrak{a^{\ast }})}\leq c(\zeta) \| h_{j,r}^z(\sqrt{\cdot })e^{-1/r^{2}(\cdot)}\|_{L^{\infty }(\mathfrak{a^{\ast }})} \notag \\&\leq c(\zeta
) \| h_{j,r}^z(\sqrt{\cdot })\|_{L^{\infty }(\mathfrak{a^{\ast }})}\leq
c(\zeta )2^{-j\operatorname{Re}z}.  \label{Tjr0L2}
\end{align} Further, by (\ref{Tjr0L2}) and the fact that $%
S_{R}^{z,0}=\sum_{j\geq 0}T_{j,r}^{z,0}$, it follows that
\begin{equation}
\Vert S_{R}^{z,0}\Vert _{L^{2}\rightarrow L^{2}} \leq \sum_{j\geq 0}\Vert
T_{j,r}^{z,0}\Vert _{L^{2}\rightarrow L^{2}} \leq c\sum_{j\geq 0}2^{-j\operatorname{Re}%
z}\leq c<\infty .  \label{t2}
\end{equation}
\end{proof}

\textit{End of the proof of Proposition \ref{T0X}}: Since $\kappa
_{R}^{z}=\sum\limits_{j\geq 0}\kappa _{j,r}^z$, by Lemma \ref{L1ball}, we have
\begin{equation*}
\Vert \kappa _{R}^{z,0}\Vert _{L^{1}(X)}=\Vert \zeta \kappa _{R}^{z}\Vert
_{L^{1}(X)}\leq c\Vert \kappa _{R}^{z}\Vert _{L^{1}(B(0,1))}<c.
\end{equation*}%
This implies that
\begin{equation}
\Vert S_{R}^{z,0}\Vert _{L^{\infty }\rightarrow L^{\infty }}\leq c(z).
\label{tapeiro}
\end{equation}

By interpolation and duality, it follows from (\ref{tapeiro}) and (\ref{t2}),
that for all
$p\in \lbrack 1,\infty ]$, $\Vert S_{R}^{z,0}\Vert _{p\rightarrow p}\leq
c(z)$, with $\operatorname{Re}z>n/2$.  

\subsection{The part at infinity}

For the part at infinity $S_R^{z, \infty}$ of the operator, we proceed as in
\cite{LOMAanna} to obtain estimates of its kernel $\kappa_{R}^{z, \infty}$. Let $l=\operatorname{rank}(X)$.

To begin with, recall that $\kappa _{R}^{z}=\mathcal{H}^{-1}s_{R}^{z}$.
Recall also the following result from \cite[p.650]{LOMAanna}, based on the Abel transform conservation property. 
\begin{lemma} For $x=k_1(\exp H)k_2\in G$, with $|x|>1$ and $k\in \mathbb{N}$ with $k>\frac{n}{2}-\frac{l}{4}$, we have that
\begin{equation}
|\kappa _{R}^{z}(x)|\leq c\varphi _{0}(x)\left( \int\limits_{|H|>|x|-\frac{1%
}{2}}\left( \sum_{|\alpha |\leq 2k}|\partial _{H}^{\alpha }(\mathcal{F}%
^{-1}s_{R}^{z})(H)|\right) ^{2}\right) ^{1/2}.  \label{kernelLOMA}
\end{equation}%
\end{lemma}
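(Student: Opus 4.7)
The goal is to bound $\kappa_R^z$ pointwise for $|x|>1$ by converting the spherical inversion formula into a Euclidean one via the Abel transform. Recall that on bi-$K$-invariant functions the spherical Fourier transform factors as $\mathcal{H}=\mathcal{F}\circ\mathcal{A}$, where $\mathcal{A}$ denotes the Abel transform. Consequently $\kappa_R^z=\mathcal{A}^{-1}(\mathcal{F}^{-1}s_R^z)$, so setting $F=\mathcal{F}^{-1}s_R^z$ reduces the problem to controlling $(\mathcal{A}^{-1}F)(H)$ for $|H|>1$. The crucial feature to exploit is the \emph{support-preserving property} of $\mathcal{A}^{-1}$: its kernel is supported above the diagonal in $|H|$, so $(\mathcal{A}^{-1}F)(H)$ depends on $F$ only at points $H'$ with $|H'|\ge|H|-\tfrac{1}{2}$, where the $\tfrac{1}{2}$ absorbs a half-unit of smoothing inherited from the inversion.

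First, I would write $(\mathcal{A}^{-1}F)(H)$ as an integral transform of $F$ and its derivatives against a kernel supported in $\{H':|H'|\ge|H|-\tfrac{1}{2}\}$. In rank one this is the explicit fractional integral of Flensted--Jensen; in higher rank one uses either a Weyl-invariant refinement of Anker's formula for $\mathcal{A}^{-1}$ or a shift-operator decomposition that reduces the inversion to repeated differentiation plus a final rank-one inverse. The output is a weighted integral of the $\partial_H^\alpha F$ over the prescribed set.

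Next, I would extract the $\varphi_0$ factor. Combining the standard asymptotic $\varphi_0(\exp H)\asymp(1+|H|)^d e^{-\rho(H)}$ with the exponential weight $e^{\rho(H)}$ naturally produced by the Abel Jacobian $\delta(H)^{1/2}$, one can factor $\kappa_R^z(x)=\varphi_0(x)\,\Psi(H)$, where $\Psi(H)$ is an unweighted integral transform of $F$ on the $l$-dimensional space $\mathfrak{a}$. Applying Cauchy--Schwarz in $H'$ and Sobolev embedding $H^{2k}(\mathbb{R}^l)\hookrightarrow L^\infty$ then turns the pointwise value of $\Psi(H)$ into the square root of the $L^2$ norm of the derivatives $\partial_H^\alpha F$ up to order $2k$, which is exactly the right-hand side of \eqref{kernelLOMA}. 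The Sobolev order must exceed $l/2$ for the embedding to hold, plus an extra $(n-l)/2$ to compensate the polynomial growth in $|H|$ surviving after pulling out $\varphi_0$; this produces the hypothesis $2k>l/2+(n-l)$, i.e.\ $k>n/2-l/4$.

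The main obstacle is the first step in arbitrary rank, where no closed-form Abel inversion is available and one must keep careful track of the Jacobian factor $\delta(H)$, of the Plancherel density $|\mathbf{c}(\lambda)|^{-2}$, and of the Weyl-group symmetrization. Verifying that the effective support of the inversion kernel is indeed $\{|H'|\ge|H|-\tfrac{1}{2}\}$, and that the $\varphi_0$-factorization is clean, is where the work lies; once this bookkeeping is settled, the Cauchy--Schwarz and Sobolev embedding steps are routine and the exponent $k>n/2-l/4$ falls out naturally from matching dimensions.
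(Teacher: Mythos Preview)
The paper does not give its own proof of this lemma: it is simply quoted from Lohou\'e--Marias \cite[p.~650]{LOMAanna}, so there is no in-paper argument to compare against. Your outline---factor $\mathcal{H}=\mathcal{F}\circ\mathcal{A}$, invoke the support-preserving property of the inverse Abel transform to restrict the $H'$-integration to $\{|H'|\ge|x|-\tfrac12\}$, peel off the $\varphi_0$ factor via the $e^{-\rho(H)}$ weight built into the Abel Jacobian, then apply Cauchy--Schwarz---is indeed the skeleton of the argument in that reference.

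One point deserves sharpening. Your heuristic for the threshold $k>n/2-l/4$ invokes ``Sobolev embedding $H^{2k}(\mathbb{R}^l)\hookrightarrow L^\infty$'', but that is not quite how the exponent arises in \cite{LOMAanna}. There the Cauchy--Schwarz step pairs the derivatives of $F=\mathcal{F}^{-1}s_R^z$ against the $L^2$ norm of the inverse-Abel kernel itself, and the latter is finite precisely when $2k$ exceeds $n-l/2$; the count comes from the polynomial growth of the Plancherel density $|\mathbf{c}(\lambda)|^{-2}$ (of order $n-l$) together with the $l$-dimensional integration on $\mathfrak{a}$, not from a pointwise embedding. This does not change the outcome, but if you intend to reconstruct the proof rather than cite it, that is the place where the bookkeeping actually happens.
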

Thus, to estimate the kernel for $|x|>1$, it suffices to obtain estimates
for the derivatives of the euclidean inverse Fourier transform of $%
s_{R}^{z}(\lambda )$. Denote by $\mathcal{J}_{\nu }(t)=t^{-\nu }J_{\nu }(t)$, $t>0$, where $J_{\nu }$
	is the Bessel function of order $\nu $. Then, it holds
\begin{equation}
(\mathcal{F}^{-1}s_{R}^{z})(\exp H)=c(n,z)R^{-z}(R-\Vert \rho \Vert
^{2})^{z+l/2}\mathcal{J}_{z+l/2}\left( \sqrt{R-\Vert \rho \Vert ^{2}}%
|H|\right) ,  \label{Fourier}
\end{equation}
\cite{DAVCHANG, GIUMA}, and we shall need the following auxiliary lemma.

\begin{lemma}
For every multi-index $\alpha $%
, it holds that
\begin{equation}
|\partial _{H}^{\alpha }\mathcal{J}_{z+l/2}(\sqrt{R-\Vert \rho \Vert ^{2}}%
|H|)|\leq c(R-\Vert \rho \Vert ^{2})^{\frac{|\alpha |}{2}-(\frac{\operatorname{Re}z}{%
2}+\frac{l+1}{4})}|H|^{-(\operatorname{Re}z+\frac{l+1}{2})}.  \label{Bessel deriv}
\end{equation}
\end{lemma}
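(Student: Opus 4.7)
The proof of (\ref{Bessel deriv}) reduces, via a rescaling, to an estimate on the derivatives of $\mathcal{J}_{z+l/2}$ at unit scale. Set $a = \sqrt{R-\|\rho\|^2}$ and $\nu = z+l/2$, and define $F:\mathbb{R}^{l} \to \mathbb{C}$ by $F(H) = \mathcal{J}_\nu(|H|)$. Since $\mathcal{J}_\nu(a|H|) = F(aH)$, the chain rule yields $\partial_H^\alpha\bigl(\mathcal{J}_\nu(a|H|)\bigr) = a^{|\alpha|}(\partial^\alpha F)(aH)$. Consequently, it suffices to prove the scale-invariant bound $|\partial^\alpha F(H')| \leq c(1+|H'|)^{-\mathrm{Re}\,\nu - 1/2}$ for all $H' \in \mathbb{R}^{l}$. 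Indeed, setting $H' = aH$ and using $\mathrm{Re}\,\nu + 1/2 = \mathrm{Re}\,z + (l+1)/2$, one obtains in the regime $a|H|\geq 1$ the estimate
$$|\partial_H^\alpha \mathcal{J}_\nu(a|H|)| \leq c\, a^{|\alpha|}(a|H|)^{-\mathrm{Re}\,z - (l+1)/2} = c\,(R-\|\rho\|^2)^{|\alpha|/2 - \mathrm{Re}\,z/2 - (l+1)/4}|H|^{-\mathrm{Re}\,z - (l+1)/2},$$
which is exactly (\ref{Bessel deriv}); the complementary regime $a|H| < 1$ is handled by the boundedness of derivatives of $\mathcal{J}_\nu$ near the origin, together with the fact that $(a|H|)^{-\mathrm{Re}\,z - (l+1)/2} \geq 1$ in this regime when $\mathrm{Re}\,z + (l+1)/2 \geq 0$.

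The key univariate bound $|\mathcal{J}_\nu^{(j)}(t)| \leq c_j(1+t)^{-\mathrm{Re}\,\nu - 1/2}$, uniform over any fixed range of $j$, is obtained from the classical identity $\frac{d}{dt}(t^{-\nu}J_\nu(t)) = -t^{-\nu}J_{\nu+1}(t)$, equivalently $\mathcal{J}_\nu'(t) = -t\,\mathcal{J}_{\nu+1}(t)$. Iterating and invoking the Bessel asymptotic $J_\mu(t) = \sqrt{2/(\pi t)}\cos(t - \mu\pi/2 - \pi/4) + O(t^{-3/2})$ for $t\geq 1$—whose differentiation preserves the amplitude $t^{-1/2}$—gives the decay at infinity, while smoothness of $\mathcal{J}_\nu$ as an entire function of $t^{2}$ provides the near-origin boundedness.

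The multi-index estimate on $F$ then follows by the radial chain rule: induction on $|\alpha|$ shows that $\partial^\alpha F(H')$ is a finite linear combination of terms of the form $P_{\alpha,j}(H'/|H'|)\,|H'|^{-(|\alpha|-j)}\,\mathcal{J}_\nu^{(j)}(|H'|)$ with $0\leq j\leq|\alpha|$ and $P_{\alpha,j}$ a bounded polynomial on the unit sphere. For $|H'| \geq 1$ the factor $|H'|^{-(|\alpha|-j)}$ is at most $1$, so each term is dominated by $c|H'|^{-\mathrm{Re}\,\nu - 1/2}$ by the univariate bound. The main obstacle will be the bookkeeping in this chain-rule expansion—tracking the powers of $|H'|^{-1}$ produced by differentiating the components $H'_i/|H'|$ and confirming the homogeneity degree $|\alpha|-j$—together with the verification that the Bessel regularity condition $\mathrm{Re}\,\nu > -1/2$ holds throughout the range of $z$ relevant for Theorems \ref{th0}--\ref{Th2}.
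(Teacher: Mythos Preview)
Your proposal is correct and follows essentially the same approach as the paper: both arguments rest on the recursion $\mathcal{J}_\nu'(t)=-t\,\mathcal{J}_{\nu+1}(t)$ together with the pointwise decay $|\mathcal{J}_\mu(t)|\le c_\mu\, t^{-(\operatorname{Re}\mu+1/2)}$, and then pass from the univariate bound to the multivariate estimate via the chain rule for radial functions. Your write-up is simply more explicit about the rescaling $H\mapsto aH$, the Fa\`a di Bruno bookkeeping, and the small-argument regime $a|H|<1$, all of which the paper compresses into the phrase ``it follows that.''
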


\begin{proof}
Using the identity $\mathcal{J}_{\nu }^{\prime }(t)=-t\mathcal{J}_{\nu
+1}(t) $, it is straightforward to get that
\begin{equation}
\mathcal{J}_{\nu }^{(a)}(t)=(-1)^{a}t^{a}\mathcal{J}_{\nu
+a}(t)+\sum_{j=1}^{[a/2]}c_{j}^{a}t^{a-2j}\mathcal{J}_{\nu +a-j}(t),\;a\in
\mathbb{N},  \label{nth derivative}
\end{equation}%
for some constants $c_{j}^{a}$, where $[a]$ denotes the integer part of $a$.
Applying the inequality
\begin{equation*}
\ |\mathcal{J}_{\mu }(t)|\leq c_{\mu }t^{-(\operatorname{Re}\mu +1/2)},\text{ for
all }t>0,
\end{equation*}%
\cite{GIUMA}, it follows that
\begin{equation*}
|\partial _{H}^{\alpha }\mathcal{J}_{\nu }(\sqrt{R-\Vert \rho \Vert ^{2}}%
|H|)|\leq c(R-\Vert \rho \Vert ^{2})^{\frac{|\alpha |}{2}-(\frac{\operatorname{Re}%
\nu }{2}+\frac{1}{4})}|H|^{-(\operatorname{Re}\nu +\frac{1}{2})}
\end{equation*}%
and (\ref{Bessel deriv}) follows by taking $\nu =z+l/2$.
\end{proof}

\begin{lemma}
\label{infinity} If $R\geq \|\rho\|^2+1$, then
\begin{equation}  \label{kappainf}
|\kappa_R^z(x)|\leq c\varphi_{0 }(x)R^{-\frac{1}{2}(\operatorname{Re}z-n+\frac{1}{2}%
)}|x|^{-\operatorname{Re}z-\frac{1}{2}}, \quad |x|>1.
\end{equation}
\end{lemma}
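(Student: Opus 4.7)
The plan is to apply the kernel estimate (\ref{kernelLOMA}) with an appropriate integer $k>n/2-l/4$, insert the explicit formula (\ref{Fourier}) for $\mathcal{F}^{-1}s_R^z$, and control the resulting derivatives via the Bessel estimate (\ref{Bessel deriv}). Since the prefactor $c(n,z)R^{-z}(R-\|\rho\|^2)^{z+l/2}$ in (\ref{Fourier}) does not depend on $H$, all $H$-derivatives act only on the Bessel factor. Combining absolute values of the prefactor with (\ref{Bessel deriv}) then yields, for every multi-index $\alpha$,
\begin{equation*}
|\partial_H^\alpha(\mathcal{F}^{-1}s_R^z)(H)|\leq c\,R^{-\operatorname{Re}z}(R-\|\rho\|^2)^{\operatorname{Re}z/2+(l-1)/4+|\alpha|/2}|H|^{-\operatorname{Re}z-(l+1)/2}.
\end{equation*}

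Since only finitely many multi-indices satisfy $|\alpha|\leq 2k$, the worst case $|\alpha|=2k$ dominates the sum up to a constant. I would then integrate the square of the resulting pointwise estimate over $\{H\in\mathfrak{a}:|H|>|x|-1/2\}$; passing to polar coordinates in $\mathbb{R}^l$, the $r^{l-1}$ Jacobian absorbs part of the power of $|H|$, and the inequality $|x|-1/2\geq|x|/2$ valid for $|x|>1$ gives
\begin{equation*}
\int_{|H|>|x|-1/2}|H|^{-2\operatorname{Re}z-(l+1)}\,dH\leq c\,|x|^{-2\operatorname{Re}z-1},
\end{equation*}
the convergence at infinity being ensured by $\operatorname{Re}z\geq 0$. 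Taking square roots, using $R-\|\rho\|^2\leq R$ to simplify the $R$-factor, and substituting into (\ref{kernelLOMA}) produces a pointwise bound of the shape $c\,\varphi_0(x)\,R^{-\operatorname{Re}z/2+(l-1)/4+k}|x|^{-\operatorname{Re}z-1/2}$, which is of the form claimed in (\ref{kappainf}).

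The delicate bookkeeping point will be matching the $R$-exponent: we want $(l-1)/4+k=n/2-1/4$, i.e.\ $k=n/2-l/4$, while $k$ is constrained to be an integer strictly exceeding $n/2-l/4$. Care is therefore required either in selecting $k$ minimally and absorbing the bounded overshoot into constants (exploiting that $R\geq\|\rho\|^2+1$), or in refining the derivative bound through a more delicate use of (\ref{nth derivative}) so that the dominant contribution from $|\alpha|=2k$ does not overshoot. All other steps are routine substitutions and the elementary Euclidean integral computed above.
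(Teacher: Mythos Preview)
Your approach is essentially identical to the paper's: apply (\ref{kernelLOMA}), insert (\ref{Fourier}), bound the $H$-derivatives via (\ref{Bessel deriv}), integrate in polar coordinates over $\{|H|>|x|-\tfrac12\}$, and combine the resulting powers of $R-\|\rho\|^{2}$ with the prefactor. The intermediate inequalities you write down match the paper's line by line.

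The bookkeeping worry you raise is real, and the paper does not resolve it either: after obtaining the exponent $-\bigl(\tfrac{\operatorname{Re}z}{2}+\tfrac{l+1}{4}\bigr)+k$ for $(R-\|\rho\|^{2})$, the paper simply writes ``since $k>\tfrac{n}{2}-\tfrac{l}{4}$'' and replaces $k$ by $\tfrac{n}{2}-\tfrac{l}{4}$ to land exactly on the claimed $R$-power. Strictly speaking this substitution goes the wrong way when $R-\|\rho\|^{2}\geq 1$, since a larger $k$ gives a larger bound. Your first proposed fix, absorbing the overshoot into constants via $R\geq\|\rho\|^{2}+1$, cannot work: that hypothesis bounds $R$ only from below, while the extra factor $(R-\|\rho\|^{2})^{k-(n/2-l/4)}$ grows as $R\to\infty$. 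Your second suggestion (a sharper use of (\ref{nth derivative})) would not help either, since the top-order term in (\ref{nth derivative}) already saturates (\ref{Bessel deriv}). In short, your argument reproduces the paper's proof together with its imprecision; the exact exponent in (\ref{kappainf}) is obtained in the paper by treating the strict inequality $k>n/2-l/4$ as an equality.
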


\begin{proof}
From (\ref{Bessel deriv}), we get that
\begin{align}
I^{2}& :=\int\limits_{|H|>|x|-\frac{1}{2}}\left( \sum_{|\alpha |\leq
2k}\left\vert \partial _{H}^{a}\mathcal{J}_{z+l/2}\left( \sqrt{R-\Vert \rho
\Vert ^{2}}|H|\right) \right\vert \right) ^{2}dH  \notag  \label{estimate} \\
& \leq c\left( \sum_{|\alpha |\leq 2k}(R-\Vert \rho \Vert ^{2})^{a/2}\right)
^{2}\times\notag \\ &\times \int\limits_{|H|>|x|-\frac{1}{2}}\left( (R-\Vert \rho \Vert ^{2})^{-(%
\frac{\operatorname{Re}z}{2}+\frac{l+1}{4})}|H|^{-(\operatorname{Re}z+\frac{l+1}{2})}\right)
^{2}dH  \notag \\
& \leq c(R-\Vert \rho \Vert ^{2})^{-2(\frac{\operatorname{Re}z}{2}+\frac{l+1}{4}%
)+2k}\int\limits_{u>|x|-\frac{1}{2}}u^{-(l+1)-2\operatorname{Re}z}u^{l-1}du  \notag
\\
& \leq c(R-\Vert \rho \Vert ^{2})^{-2(\frac{\operatorname{Re}z}{2}+\frac{l+1}{4}%
)+2k}\left( |x|-\frac{1}{2}\right) ^{-2\operatorname{Re}z-1}.
\end{align}
For $R\geq \Vert \rho \Vert ^{2}+1$, since $k>\frac{n}{2}-\frac{l}{4}$, we
have that
\begin{equation}
I\leq c(R-\Vert \rho \Vert ^{2})^{-(\frac{\operatorname{Re}z+l-n}{2}+\frac{1}{4}%
)}\left( |x|-\frac{1}{2}\right) ^{-\operatorname{Re}z-\frac{1}{2}}.
\label{estimate1}
\end{equation}%
Using (\ref{estimate1}) and (\ref{Fourier}), from (\ref{kernelLOMA}) we
obtain that
\begin{align*}
|\kappa _{R}^{z}(x)|& \leq c\varphi _{0}(x)R^{-\operatorname{Re}z}(R-\Vert \rho
\Vert ^{2})^{\operatorname{Re}z+\frac{l}{2}}\times \notag \\ &\times (R-\Vert \rho \Vert ^{2})^{-(\frac{\operatorname{%
			Re}z+l-n}{2}+\frac{1}{4})}\left( |x|-\frac{1}{2}\right) ^{-\operatorname{Re}z-\frac{1%
}{2}} \\
& \leq c\varphi _{0}(x)R^{-\frac{1}{2}(\operatorname{Re}z-n+\frac{1}{2})}|x|^{-\operatorname{%
Re}z-\frac{1}{2}},\quad |x|>1.
\end{align*}
\end{proof}

Using the estimate (\ref{estimate1}) and proceeding as above, one can prove
the following result.

\begin{lemma}
\label{extracase}If $\Vert \rho \Vert ^{2}\leq R\leq \Vert \rho \Vert
^{2}+1, $ then
\begin{equation*}
|\kappa _{R}^{z}(x)|\leq c\varphi _{0}(x)|x|^{-\operatorname{Re}z-\frac{1}{2}%
},\;|x|>1.
\end{equation*}
\end{lemma}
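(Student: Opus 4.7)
The proof follows the same template as Lemma \ref{infinity}, but the arithmetic with powers of $R-\Vert\rho\Vert^{2}$ flips direction because now $R-\Vert\rho\Vert^{2}\in[0,1]$ rather than $\geq 1$. My plan is to start from \eqref{kernelLOMA} and substitute \eqref{Fourier}, keeping the factor $R^{-z}(R-\Vert\rho\Vert^{2})^{z+l/2}$ outside the integral; then bound $I$ by applying \eqref{Bessel deriv} exactly as in \eqref{estimate}, which yields
\[
I\leq c(R-\Vert\rho\Vert^{2})^{-(\frac{\operatorname{Re}z}{2}+\frac{l+1}{4})+k}\left(|x|-\tfrac{1}{2}\right)^{-\operatorname{Re}z-\frac{1}{2}}.
\]
The key observation is that when all factors are multiplied together the total exponent of $(R-\Vert\rho\Vert^{2})$ comes out to
\[
\bigl(\operatorname{Re}z+\tfrac{l}{2}\bigr)+\bigl(-\tfrac{\operatorname{Re}z}{2}-\tfrac{l+1}{4}+k\bigr)=\tfrac{\operatorname{Re}z}{2}+\tfrac{l}{4}-\tfrac{1}{4}+k,
\]
which is strictly positive for any admissible $k$ (recall $k>\tfrac{n}{2}-\tfrac{l}{4}\geq \tfrac{1}{4}$ and $\operatorname{Re}z\geq 0$). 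Consequently the factor $(R-\Vert\rho\Vert^{2})^{\,\text{pos.\ exp.}}$ is bounded by $1$ on the interval $[0,1]$, so unlike in Lemma \ref{infinity} we do not need to extract a negative power.

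From there the rest is bookkeeping. The factor $R^{-\operatorname{Re}z}$ is bounded by $\Vert\rho\Vert^{-2\operatorname{Re}z}$ (independent of $R$ on the range considered), and for $|x|>1$ we use $|x|-\tfrac{1}{2}\geq |x|/2$ to replace $(|x|-\tfrac{1}{2})^{-\operatorname{Re}z-\frac{1}{2}}$ by $c|x|^{-\operatorname{Re}z-\frac{1}{2}}$. Combining everything yields the claimed inequality
\[
|\kappa_{R}^{z}(x)|\leq c\varphi_{0}(x)|x|^{-\operatorname{Re}z-\frac{1}{2}}.
\]

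The ``main obstacle'' here is really only a bookkeeping check: one must verify that the exponent of $(R-\Vert\rho\Vert^{2})$ after collecting terms is nonnegative, for otherwise the factor would blow up as $R\downarrow\Vert\rho\Vert^{2}$. As the computation above shows, this is guaranteed by the lower bound on $k$ coming from the hypothesis $k>\tfrac{n}{2}-\tfrac{l}{4}$ in \eqref{kernelLOMA}. No new ingredient beyond those already used for Lemma \ref{infinity} is needed, which is why the lemma can be asserted with the remark ``proceeding as above.''
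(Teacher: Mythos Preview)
Your approach is the paper's own (the paper merely says ``using \eqref{estimate1} and proceeding as above''), and the overall strategy is correct. There is, however, a slip in the intermediate bound on $I$. The final inequality in \eqref{estimate} uses
\[
\sum_{|\alpha|\leq 2k}(R-\Vert\rho\Vert^{2})^{|\alpha|/2}\leq c(R-\Vert\rho\Vert^{2})^{k},
\]
which is valid only when $R-\Vert\rho\Vert^{2}\geq 1$ (the top-degree term dominates). In the present range $0\leq R-\Vert\rho\Vert^{2}\leq 1$ it is the $|\alpha|=0$ term that dominates, so the sum is merely $\leq c_{k}$, and the correct estimate is
\[
I\leq c(R-\Vert\rho\Vert^{2})^{-(\frac{\operatorname{Re}z}{2}+\frac{l+1}{4})}\left(|x|-\tfrac{1}{2}\right)^{-\operatorname{Re}z-\frac{1}{2}},
\]
i.e.\ without the extra $+k$ in the exponent. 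Your displayed bound for $I$ is therefore too strong and not justified.

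This does not damage the conclusion: recomputing the total power of $(R-\Vert\rho\Vert^{2})$ with the corrected $I$ gives
\[
\bigl(\operatorname{Re}z+\tfrac{l}{2}\bigr)-\bigl(\tfrac{\operatorname{Re}z}{2}+\tfrac{l+1}{4}\bigr)=\tfrac{\operatorname{Re}z}{2}+\tfrac{l-1}{4}\geq 0
\]
since $l\geq 1$ and $\operatorname{Re}z\geq 0$, so $(R-\Vert\rho\Vert^{2})^{\text{nonneg.}}\leq 1$ on $[0,1]$ exactly as you argue. The rest of your bookkeeping ($R^{-\operatorname{Re}z}\leq\Vert\rho\Vert^{-2\operatorname{Re}z}$ and $|x|-\tfrac12\geq|x|/2$) is fine.
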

Finally, we shall prove the following result, which, combined with Proposition \ref{T0X}, finishes the proof of Theorem 1.
\begin{proposition}\label{11}
\label{TinfX} Let $\operatorname{Re}z\geq n-\frac{1}{2}$ and consider $q>2$. Then
for every $p$ such that $1\leq p\leq q^{\prime }$, $S_{R}^{z,\infty }$ is
continuous from $L^{p}(X)$ to $L^{r}(X)$ for every $r\in \lbrack qp^{\prime
}/(p^{\prime }-q),\infty ]$, and $\Vert S_{R}^{z,\infty }\Vert
_{p\rightarrow r}\leq c(z)$ for all $R\geq \left\Vert \rho \right\Vert^2 $.
\end{proposition}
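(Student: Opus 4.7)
The plan is to reduce the proposition to a uniform $L^{s}(X)$-bound on the kernel $\kappa_{R}^{z,\infty}$ for every $s>2$, and then conclude by combining Young's convolution inequality, H\"older's inequality and log-convexity of $L^{r}$-norms. The hypothesis $\operatorname{Re}z\geq n-\frac{1}{2}$ is used exactly once, to remove the $R$-dependence coming out of Lemma~\ref{infinity}.

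For the kernel estimate, I would combine Lemma~\ref{infinity} with Lemma~\ref{extracase} to cover all $R\geq\|\rho\|^{2}$. The condition $\operatorname{Re}z\geq n-\frac{1}{2}$ makes the exponent of $R$ in Lemma~\ref{infinity} nonpositive, so the prefactor $R^{-\frac{1}{2}(\operatorname{Re}z-n+\frac{1}{2})}$ is bounded by $1$ for $R\geq\|\rho\|^{2}+1$. Since $\kappa_{R}^{z,\infty}=(1-\zeta)\kappa_{R}^{z}$ is supported in $\{|x|\geq 1/2\}$, this produces the $R$-free pointwise bound
\[
|\kappa_{R}^{z,\infty}(x)|\leq c\,\varphi_{0}(x)\,|x|^{-\operatorname{Re}z-\frac{1}{2}},\qquad |x|\geq 1/2.
\]
Using the integration formula (\ref{rightinv}) together with (\ref{phi0}) and (\ref{modular}), the $L^{s}(X)$-norm is then controlled by an integral of the form
\[
\int_{H\in\mathfrak{a}_{+},\,|H|\geq 1/2}(1+|H|)^{ds}\,|H|^{-(\operatorname{Re}z+\frac{1}{2})s}\,e^{(2-s)\rho(H)}\,dH.
\]
Since $\rho$ is strictly positive on $\overline{\mathfrak{a}_{+}}\setminus\{0\}$, one has $\rho(H)\gtrsim |H|$ there, so for $s>2$ the exponential factor $e^{(2-s)\rho(H)}$ dominates the polynomial ones; the integral converges to a constant independent of $R$, yielding $\|\kappa_{R}^{z,\infty}\|_{s}\leq c(z)$. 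This is the usual Kunze--Stein-type integrability $\varphi_{0}\in L^{s}$ for $s>2$, in disguise.

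With this uniform kernel bound in hand, Young's inequality with $s=q>2$ gives $\|S_{R}^{z,\infty}f\|_{r_{0}}\leq c(z)\|f\|_{p}$ with $1/r_{0}=1/p+1/q-1$; under the assumption $p\leq q'$, a short algebraic manipulation identifies $r_{0}=qp'/(p'-q)\in[1,\infty]$. Since $p\leq q'<2$ forces $p'\geq q>2$, the kernel also lies in $L^{p'}$, and H\"older's inequality immediately gives $\|S_{R}^{z,\infty}f\|_{\infty}\leq \|\kappa_{R}^{z,\infty}\|_{p'}\,\|f\|_{p}\leq c(z)\|f\|_{p}$. Finally, log-convexity of the $L^{r}$-norm, $\|g\|_{r}\leq\|g\|_{r_{0}}^{r_{0}/r}\|g\|_{\infty}^{1-r_{0}/r}$, interpolates between these two endpoints and delivers the full range $r\in[r_{0},\infty]$ with constants independent of $R$.

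The main technical obstacle is the $L^{s}$-bound of the kernel for $s>2$: one must verify that the exponential gain $e^{(2-s)\rho(H)}$ genuinely beats both the modular function growth $e^{2\rho(H)}$ built into $\delta(H)$ and the polynomial factors $(1+|H|)^{ds}|H|^{-(\operatorname{Re}z+1/2)s}$, and that the hypothesis $\operatorname{Re}z\geq n-\frac{1}{2}$ exactly cancels the $R$-growth in the prefactor of Lemma~\ref{infinity}. Once this Kunze--Stein-type integrability is established, the rest of the argument reduces to a formal application of the Young--H\"older--interpolation machinery.
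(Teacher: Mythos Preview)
Your proposal is correct and follows the same route as the paper: both combine Lemmata~\ref{infinity} and~\ref{extracase} with the estimate~(\ref{phi0}) to place $\kappa_R^{z,\infty}$ in $L^s(X)$ uniformly in $R$ for every $s>2$, and then conclude via Young's inequality. You supply more detail than the paper does---the explicit $L^s$-integral over $\mathfrak{a}_+$ and the H\"older/log-convexity argument covering the full range of $r$---but the strategy is identical; one small caveat is that your pointwise bound is asserted on $\{|x|\geq 1/2\}$ whereas Lemmata~\ref{infinity} and~\ref{extracase} are stated only for $|x|>1$, so the annulus $1/2\leq |x|\leq 1$ strictly speaking needs a word (the paper glosses over the same point).
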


\begin{proof}
Recall that $\kappa_{R}^{z, \infty}(x)=\kappa_{R}^z(x)$ for every $|x|>1$. Using the estimates of $\kappa _{R}^{z }$ from Lemmata \ref{infinity}
and \ref{extracase}, as well as the estimate (\ref{phi0}), it follows that $%
\kappa _{R}^{z,\infty }$ is in $L^{q}(X)$ for every $q>2$. Thus, by Young's
inequality, the operator $f\rightarrow |f|\ast \kappa _{R}^{z,\infty }$ maps
$L^{p}(X)$, $p\in \lbrack 1,q^{\prime }]$, continuously into $L^{r}(X)$, for
every $r\in \lbrack qp^{\prime }/(p^{\prime }-q),\infty ]$.

Further, for $z\geq n-\frac{1}{2}$, in Lemmata \ref{infinity} and \ref%
{extracase} the estimates of the kernel $\kappa _{R}^{z,\infty }$ are uniform with respect to $R$. This implies that the norm $\Vert S_{R}^{z,\infty }\Vert
_{p\rightarrow r}$ is bounded by a constant, uniform with respect to $R$.
\end{proof}

\section{Proof of Theorem 2 and Theorem 3}

In this section we give the proof of Theorem \ref{Th1}, which deals with the
$L^{p}$-continuity of the maximal operator $S_{\ast }^{z}$ associated with
the Riesz means. This allows us to deduce the almost everywhere convergence
of Riesz means $S_{R}^{z}(f)$ to $f$, as $R\rightarrow +\infty $.

Recall first that
\begin{equation}
S_{\ast }^{z}(f)=\sup_{R>\Vert \rho \Vert ^{2}}|S_{R}^{z}(f)|,\;f\in L^{p}({X%
}).  \label{maximal}
\end{equation}
The following proposition holds true, \cite[Lemma 4.1]{GIUMA}.
\begin{proposition}\label{S*L2}
	Let $\operatorname{Re}z>0$. Then, $S_*^{z}$ is continuous on $L^2(X)$.
\end{proposition}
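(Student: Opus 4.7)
The plan is to adapt Stein's $L^2$ maximal bound for Bochner--Riesz means (cf.~Stein, \emph{Singular Integrals and Differentiability Properties of Functions}, Ch.~IV) to the noncompact symmetric-space setting, using the spectral theorem for $-\Delta-\|\rho\|^2$ and the spherical Plancherel formula.

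By the spectral theorem one writes $S_R^z f = s_R^z(\sqrt{-\Delta-\|\rho\|^2})f$, and Plancherel gives the trivial individual bound $\|S_R^z\|_{L^2\to L^2}\leq \|s_R^z\|_\infty\leq 1$, uniformly in $R$, for $\operatorname{Re} z\geq 0$. Moreover $s_R^z(\lambda)\to 1$ as $R\to\infty$, so $S_R^z f\to f$ in $L^2$. The key pointwise inequality is
\begin{equation*}
\sup_{R>R_0} |S_R^z f(x)|^2 \leq |S_{R_0}^z f(x)|^2 + 2\int_{R_0}^\infty |S_s^z f(x)|\,|\partial_s S_s^z f(x)|\,ds,
\end{equation*}
obtained by integrating $\partial_s |S_s^z f(x)|^2$ against $ds$. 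Taking $L^1(X)$ norms and applying Cauchy--Schwarz in both $ds$ (with weight $s$) and $dx$ reduces matters to the $L^2$ boundedness of the Stein $g$-function
\begin{equation*}
G^z(f)(x) := \left(\int_{R_0}^\infty s\,|\partial_s S_s^z f(x)|^2\,ds\right)^{1/2},
\end{equation*}
together with control of a companion integral of $|S_s^z f|^2/s$.

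For $G^z$, Plancherel converts its $L^2$ norm into the spectral integral $\int s\,|\partial_s s_s^z(\lambda)|^2\,ds$, which must be uniform in $\lambda$; the change of variable $u = (\|\lambda\|^2+\|\rho\|^2)/s$ turns this into the beta-function integral $|z|^2 B(2, 2\operatorname{Re} z - 1)$, finite for $\operatorname{Re} z > 1/2$. A frequency-localized (dyadic in $\sqrt{-\Delta-\|\rho\|^2}$) version of the argument avoids the logarithmic divergence of the companion integral and yields the $L^2$ maximal bound for $\operatorname{Re} z > 1/2$.

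To cover the full range $0<\operatorname{Re} z\leq 1/2$, I would linearize $S_*^z$ by a measurable choice of $R=R(x)$ realizing the supremum, view $z\mapsto L^z f(x):=S_{R(x)}^z f(x)$ as an analytic family of linear operators, and apply Stein's complex interpolation between the strong bound just obtained on a line $\operatorname{Re} z = \tfrac12 + \varepsilon$ and an admissibility estimate on the imaginary axis with polynomial growth in $|\operatorname{Im} z|$. The main obstacle is the boundary estimate: the naive bound $\|s_R^{i\beta}\|_\infty \leq 1$ does not produce a maximal-type control, and one must use kernel-level information, combining the heat-kernel representation of $\kappa_R^{i\beta}$ with the finite propagation speed of the wave operator (as exploited in Section~2) to extract a polynomial-in-$|\beta|$ bound uniform in the measurable choice $R(\cdot)$. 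This is essentially the line of argument of Giulini--Mauceri \cite[Lemma~4.1]{GIUMA}.
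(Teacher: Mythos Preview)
The paper does not actually prove this proposition; it simply cites \cite[Lemma~4.1]{GIUMA}. That lemma --- like Stein's original $L^2$ maximal theorem for Bochner--Riesz means --- is a purely spectral-theoretic fact, valid for the Riesz means of any non-negative self-adjoint operator on a Hilbert space. It uses only the functional calculus and Plancherel, with no kernel estimates and nothing specific to (rank-one) symmetric spaces; this is precisely why the present authors can invoke it verbatim in arbitrary rank.

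Your $g$-function argument for $\operatorname{Re} z > 1/2$ is the standard one and is essentially correct. The genuine gap is in the range $0<\operatorname{Re} z\le 1/2$. The complex interpolation you propose requires, on the line $\operatorname{Re} z=0$, a bound $\|S_{R(\cdot)}^{\,i\beta} f\|_2\le C(\beta)\|f\|_2$ that is uniform in the measurable selection $R(\cdot)$; equivalently, the maximal inequality for $\sup_R|S_R^{\,i\beta}f|$. Since $|(1-u)_+^{i\beta}|=\mathbf 1_{[0,1)}(u)$, this is, up to a unimodular factor, the maximal spectral-projection operator $\sup_R|E_R f|$ --- a Carleson-type problem of an entirely different depth, certainly not accessible by the methods you suggest. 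The heat-kernel and finite-propagation tools of Section~3 are designed to produce $L^1$ bounds on kernels of \emph{smooth} multipliers and to feed into the $L^p$ theory; on the imaginary axis the multiplier is discontinuous, the convolution kernel $\kappa_R^{\,i\beta}$ is a genuine distribution, and those techniques give no $R$-uniform maximal control on $L^2$. This is not the argument in \cite{GIUMA}, and the appeal to kernel-level information is a wrong turn: an $L^2$ maximal estimate of this type should --- and does --- follow from soft spectral arguments alone, without ever touching the imaginary axis.
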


Recall the following decomposition of the kernel $\kappa_{R}^z$ of the operator $S_R^z$:
\begin{equation}  \label{decomp1}
\kappa_{R}^{z}=\zeta \kappa_{R}^{z}+(1-\zeta
)\kappa_{R}^z:=\kappa_{R}^{z,0}+\kappa_{R}^{z,\infty },
\end{equation}%
where $\zeta \in C^{\infty }(K\backslash G/K)$ is a cut-off function such
that 
\begin{equation}
\zeta (x)= 
\begin{cases}
1, & \text{if }|x|\leq 1/2, \\ 
0, & \text{if }|x|\geq 1.%
\end{cases}
\label{zita1}
\end{equation}
Denote by ${S}_R^{z,0}$ (resp. ${S}_R^{z,\infty}$) the convolution operators
on $X$ with kernel $\kappa _R^{z,0}$ (resp. $\kappa_R^{z,\infty})$. Then, 
\[\
S_*^zf\leq\sup\limits_{R\geq \|\rho\|^2}|S_R^{z,0}f|+\sup\limits_{R\geq \|\rho\|^2}|S_R^{z, \infty}f|.
\]
The following holds true for the part at infinity $S_{\ast}^{z, \infty}$ of the operator $S_{\ast}^{z}$.
\begin{proposition}\label{propinf}
	\label{TinfXR} Let $ \operatorname{Re}z \geq n-\frac{1}{2}$. Then, for every $q>2$ and $p\in[1, q^{\prime}]$, $S_*^{z,\infty}$ is continuous from $L^p(X)$ to
	$L^r(X)$ for every $r\in[qp^{\prime}/(p^{\prime} - q), \infty]$.
\end{proposition}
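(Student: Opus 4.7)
The plan is to reduce the maximal bound to a single convolution inequality by showing that the entire family $\{\kappa_R^{z,\infty}\}_{R\geq \|\rho\|^2}$ is dominated pointwise by a fixed function $K$ lying in $L^q(X)$ for every $q>2$, and then invoking Young's inequality exactly as in Proposition~\ref{11}. No vector-valued or square-function machinery is needed, because under the hypothesis $\operatorname{Re} z\geq n-1/2$ the kernel estimates happen to be uniform in $R$.

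The key step is to extract this uniform domination from Lemmata~\ref{infinity}--\ref{extracase}. The factor $R^{-\frac12(\operatorname{Re} z-n+1/2)}$ in Lemma~\ref{infinity} has nonpositive exponent precisely when $\operatorname{Re} z\geq n-1/2$, so it is bounded by a constant independent of $R\geq\|\rho\|^2$; combined with Lemma~\ref{extracase} for the small-$R$ regime, this yields
\begin{equation*}
|\kappa_R^z(x)|\leq c\,\varphi_0(x)\,|x|^{-\operatorname{Re} z-1/2},\qquad |x|>1,
\end{equation*}
uniformly in $R$. Since $\kappa_R^{z,\infty}$ is supported in $\{|x|\geq 1/2\}$, setting
\begin{equation*}
K(x):=c\,\varphi_0(x)\,|x|^{-\operatorname{Re} z-1/2}\,\mathbf{1}_{\{|x|\geq 1/2\}},
\end{equation*}
(with an elementary ad hoc estimate on the transition annulus $1/2\leq|x|\leq 1$ where the cut-off $1-\zeta$ is nontrivial) gives $|\kappa_R^{z,\infty}(x)|\leq K(x)$ for all admissible $R$, whence pointwise
\begin{equation*}
S_*^{z,\infty}f(x)\leq (|f|\ast K)(x).
\end{equation*}

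It then remains to check that $K\in L^q(X)$ for every $q>2$. Using (\ref{rightinv}), the modular estimate (\ref{modular}) and the spherical function bound (\ref{phi0}),
\begin{equation*}
\|K\|_q^q\leq c\int_{\mathfrak{a}_+,\,|H|\geq 1/2}(1+|H|)^{dq}\,|H|^{-(\operatorname{Re} z+1/2)q}\,e^{-(q-2)\rho(H)}\,dH,
\end{equation*}
which is finite because $e^{-(q-2)\rho(H)}$ decays exponentially on $\overline{\mathfrak{a}_+}$ whenever $q>2$. Young's inequality then gives $\|\,|f|\ast K\,\|_r\leq \|K\|_q\|f\|_p$ for every $p\in[1,q']$ and every $r\in[qp'/(p'-q),\infty]$, which is the desired bound for $S_*^{z,\infty}$.

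The only genuinely delicate point is verifying the uniform control of the kernel in $R$; this is already implicit in the proof of Proposition~\ref{11} and is precisely the role of the threshold $\operatorname{Re} z\geq n-1/2$. Everything past that — the $L^q$-integrability of $K$ and the Young-type conclusion — is routine.
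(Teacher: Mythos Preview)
Your proposal is correct and follows exactly the route the paper indicates: the paper omits the proof, stating only that it ``relies on the uniform kernel estimates for $\kappa_{R}^{z,\infty}$ implied by Lemmata~\ref{infinity} and~\ref{extracase}'' and ``is similar to the proof of Proposition~\ref{11}.'' You have simply written out that argument in full, correctly isolating the role of the threshold $\operatorname{Re}z\geq n-\tfrac12$ in making the $R$-dependence disappear, producing the single dominating kernel $K$, and then repeating the $L^q$-integrability and Young's inequality step from Proposition~\ref{11}.
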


	The proof relies on the uniform kernel estimates for $\kappa_{R}^{z, \infty}$ implied by Lemmata 9 and 10. It is similar to the proof of Proposition 11, thus omitted.

We shall now prove the following result concerning the local part $S_{\ast}^{z, 0}$ of the Riesz means maximal operator.
\begin{proposition}\label{proplocal}
	Let $\operatorname{Re}z\geq n-\frac{1}{2}$. Then, $S_*^{z,0}$ is continuous on $L^p(X)$, for every $p \in (1, \infty)$, and it maps $L^1(X)$ continuously into $L^{1,w}(X)$.
\end{proposition}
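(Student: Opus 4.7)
The plan is to reduce the boundedness of $S_*^{z,0}$ to that of the Hardy--Littlewood maximal operator $\mathcal{M}$ on $X$, by establishing the pointwise domination
\begin{equation*}
 S_*^{z,0}f(x)\;\leq\;c\,\mathcal{M}f(x),\qquad x\in X.
\end{equation*}
Since $\mathcal{M}$ is of strong type $(p,p)$ for $p\in(1,\infty)$ and of weak type $(1,1)$ on $X$---the local polynomial volume control (\ref{vol}) provides the doubling condition on the scales $r\leq 1$ relevant to us, cf.\ \cite{stromberg}---the two claims of the proposition follow at once from this pointwise inequality.

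To obtain the pointwise bound, it is enough to produce a single radial function $\Phi\in L^{1}(X)$, supported in $\{|x|\leq 1\}$ and nonincreasing in $|x|$, such that
\begin{equation*}
 \sup_{R\geq \|\rho\|^2}|\kappa_R^{z,0}(x)|\;\leq\;\Phi(x),\qquad x\in X.
\end{equation*}
Once such a $\Phi$ is available, one has $S_*^{z,0}f(x)\leq (|f|\ast\Phi)(x)$, and the radial majorant principle (see, e.g., \cite{STEIN29}) yields $S_*^{z,0}f(x)\leq \|\Phi\|_{1}\,\mathcal{M}f(x)$.

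The construction of $\Phi$ reuses the dyadic decomposition $\kappa_R^{z,0}=\sum_{j\geq 0}\kappa_{j,r}^{z,0}$ (with $r=\sqrt R$) introduced in the proof of Proposition \ref{T0X}, and refines the analysis behind Lemma \ref{L1ball}. Rather than integrating $\kappa_{j,r}^{z,0}$ over the dyadic annuli $A_p\subset B(x_0,1)$ to get $L^{1}$-norms, I would combine the Fourier-side tail estimate (\ref{hhat}) for $\hat h_{j,r}^z$ with the finite speed of propagation of the wave operator $\cos(t\sqrt{-\Delta-\|\rho\|^2})$ and with the heat-kernel tail bound (\ref{grigoryan}), so as to extract $R$-uniform pointwise estimates of $\kappa_{j,r}^{z,0}$ on each $A_p$. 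The decay factor $2^{-j(\operatorname{Re}z-n/2)}$ appearing in the $j$-sum is summable, because the stronger hypothesis $\operatorname{Re}z\geq n-\tfrac12$ gives $\operatorname{Re}z-n/2\geq (n-1)/2>0$; this is exactly why a lower bound on $\operatorname{Re}z$ stricter than the one in Proposition \ref{T0X} is required here. Summing in $j$ and then in $p$ produces a function on $B(x_0,1)$ which can be dominated by a radial nonincreasing $\Phi\in L^{1}(X)$.

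\textbf{Main obstacle.} The delicate step is converting the $L^{2}$-based annulus bounds of Lemma \ref{L1ball} into $R$-uniform pointwise bounds on $\kappa_{j,r}^{z,0}$. This $L^{2}\to L^{\infty}$ loss is precisely what the extra regularity $\operatorname{Re}z\geq n-\tfrac12$ (over $\operatorname{Re}z>n/2$ from Proposition \ref{T0X}) has to absorb, while still leaving enough decay in $j$ for the dyadic sum to converge and for the assembled radial majorant to remain integrable near the origin.
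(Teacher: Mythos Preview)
Your approach has a genuine gap. The claim that a \emph{single} radial $\Phi\in L^{1}(X)$ can dominate $\sup_{R\geq\|\rho\|^{2}}|\kappa_R^{z,0}(x)|$ is false: the value at the origin satisfies
\[
\kappa_R^{z,0}(x_0)=\kappa_R^{z}(x_0)=c\int_{\mathfrak a^{*}}s_R^{z}(\lambda)\,\frac{d\lambda}{|\mathbf c(\lambda)|^{2}}\;\asymp\;R^{n/2}\quad\text{as }R\to\infty,
\]
so no pointwise $R$-uniform majorant exists. In the classical Bochner--Riesz setting one dominates instead by a \emph{rescaled} family $\Phi_R(x)=R^{n/2}\Psi(\sqrt R\,|x|)$ with $\Psi\in L^{1}$, and it is the uniform bound $\|\Phi_R\|_{1}\leq c$ that feeds into the maximal-function comparison. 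To run that argument here you would need a pointwise estimate of the shape $|\kappa_R^{z,0}(x)|\leq c\,R^{n/2}(1+\sqrt R\,|x|)^{-N}$ for some $N>n$, uniformly in $R$. The machinery of Lemma~\ref{L1ball} only gives $L^{2}$ control of $\kappa_{j,r}^{z}$ on annuli; you acknowledge the $L^{2}\to L^{\infty}$ upgrade as the ``main obstacle'' but do not indicate any mechanism to achieve it, and the extra half-derivative in $\operatorname{Re}z\geq n-\tfrac12$ is not by itself enough to pass from integrated to pointwise bounds in this framework. A secondary issue is that the Stein--Weiss radial majorant principle you invoke is Euclidean; on $X$ one must argue via the local doubling property on $\{|x|\leq 1\}$, which is fine, but it requires the scaled-majorant formulation, not a fixed $\Phi$.

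The paper proceeds quite differently. It subtracts the heat semigroup, writing $s_R^{z}(\lambda)=w_{R^{-1}}(\lambda)+M(R^{-1}\lambda)$, so that $S_*^{z,0}$ is controlled by the heat maximal operator (whose $L^{p}$ and weak $(1,1)$ bounds are known) plus $\sup_R|M^{0}(-R^{-1}\Delta)f|$. The latter is linearised via the Mellin transform,
\[
M^{0}(-R^{-1}\Delta)=\int_{-\infty}^{+\infty}\mathcal M(\gamma)\,R^{-i\gamma}\,(-\Delta)^{i\gamma,0}\,d\gamma,
\]
so the supremum in $R$ disappears and one is reduced to bounding the localised imaginary powers $(-\Delta)^{i\gamma,0}$ on $L^{p}$ and $L^{1}\to L^{1,w}$, with polynomial growth in $\gamma$. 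This is done via Anker's H\"ormander-type multiplier theorem, and the integrability in $\gamma$ is supplied by the decay $|\mathcal M(\gamma)|\leq c(1+|\gamma|)^{-(\operatorname{Re}z+1)}$. No pointwise kernel majorant is used.
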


Denote by $e^{t\Delta}, \;t>0$, the heat operator on $X$. Then, $e^{t\Delta}=\ast p_t$, where $p_t$ is the heat kernel on $X$. Recall that $p_t$ is given as the inverse spherical Fourier transform of 
\begin{equation*}
w_t(\lambda)=e^{-t(\|\lambda\|^2+\|\rho\|^2)}, \; \lambda\in \mathfrak{\ a^{\ast}}.
\end{equation*}Consider the radial multiplier
\begin{equation}\label{Mdef1}
M(R^{-1}\lambda):=s_R^z(\lambda)-w_{R^{-1}}(\lambda), \; R\geq \|\rho\|^2.
\end{equation}

Denote by $K_R(x)$ the kernel of the operator $M(-R^{-1}\Delta)$ and set $K_R^0(x):=\zeta(x)K_R(x)$. Similarly, set $s_R^{z,0}=\mathcal{H}(\zeta \kappa_{R}^{z})=\mathcal{H}( \kappa_{R}^{z,0})$ and  $w_{R^{-1}}^0=\mathcal{H}(\zeta p_{R^{-1}})=\mathcal{H}( p^0_{R^{-1}})$. Then, using (\ref{Mdef1}), we have that
\begin{equation}\label{mult1}
\mathcal{H}(\kappa_{R}^0):=M^0(-R^{-1}\cdot)=s_R^{z,0}-w_{R^{-1}}^0,
\end{equation}
From (\ref{mult1}) we have that
\begin{equation} \label{1,0}
S_*^{z,0}f=\sup_{R\geq\|\rho\|^2}|s_R^{z,0}(-\Delta)f|\leq \sup_{R\geq\|\rho\|^2}|M^0(-R^{-1} \Delta)f|+\sup_{R\geq\|\rho\|^2}|f \ast p^0_{R^{-1}} |.
\end{equation}

Consider the operator $(-\Delta)^{i\gamma}$, $\gamma \in \mathbb{R}$, which in the spherical Fourier transform variables is given by
\[\
\mathcal{H}((-\Delta)^{i\gamma}f)=(\|\lambda\|^2+\|\rho\|^2)^{i\gamma}\mathcal{H}(f), \; \lambda\in \mathfrak{\ a^{\ast}}.\]
Denote by 
\[
\kappa^{\gamma}=\mathcal{H}^{-1}((\|\lambda\|^2+|\rho\|^2)^{i\gamma})
\]
the kernel of $(-\Delta)^{i\gamma}$.  As in \cite{ALEXOLO, GIUMA}, using the Mellin transform $\mathcal{M}(\gamma)$ of the radial function $M(\lambda)$, one can express the operator $M(-R^{-1}\Delta)$ as follows:
\begin{equation}\label{Mellin1}
M(-R^{-1}\Delta)=\int_{-\infty}^{+\infty}\mathcal{M}(\gamma)R^{-i\gamma}(-\Delta)^{i\gamma}d\gamma, 
\end{equation}
where
\begin{equation}\label{calM}
|\mathcal{M}(\gamma)|\leq c(1+|\gamma|)^{-(\operatorname{Re}z+1)},
\end{equation}
\cite{GIUMA}. Using (\ref{Mellin1}), the kernel $K_R$ of $M(-R^{-1}\Delta)$ is given by
\begin{equation*}
K_R=\int_{-\infty}^{+\infty}\mathcal{M}(\gamma)R^{-i\gamma} \kappa^{\gamma}d\gamma,
\end{equation*} and thus
	\begin{align*}
	K_R^0(x)&=\zeta(x) K_R(x)=\int_{-\infty}^{+\infty}\mathcal{M}(\gamma)R^{-i\gamma} \zeta(x)\kappa^{\gamma}(x)d\gamma\\
	&=\int_{-\infty}^{+\infty}\mathcal{M}(\gamma)R^{-i\gamma} \kappa^{\gamma,0}(x)d\gamma .
	\end{align*} 
It follows that 
	\[\
	M^0(-R^{-1}\Delta)=\int_{-\infty}^{+\infty}\mathcal{M}(\gamma)R^{-i\gamma} (-\Delta)^{i\gamma,0}d\gamma .
	\] Hence, 
\begin{equation}\label{Mellin}
\sup_{R>\|\rho\|^2} |M^0(-R^{-1}\Delta)f|\leq \int_{-\infty}^{+\infty}|\mathcal{M}(\gamma)||(-\Delta)^{i\gamma,0}f|d\gamma
\end{equation}
\begin{lemma}\label{Anker}
	The operator $(-\Delta)^{i\gamma,0}$ is bounded on $L^p$, $p\in(1, \infty)$, with
	\begin{equation}\label{Lpnorm}
	\|(-\Delta)^{i\gamma,0}\|_{L^p\rightarrow L^p}\leq c_p(1+|\gamma|)^{[n/2]+1}.
	\end{equation}
	Moreover, the operator $(-\Delta)^{i\gamma,0}$ is also $L^1\rightarrow L^{1,w}$ bounded, with 
	\begin{equation}\label{L1wnorm}
	\|(-\Delta)^{i\gamma,0}\|_{L^1\rightarrow L^{1,w}}\leq c(1+|\gamma|)^{[n/2]+1}.
	\end{equation}
\end{lemma}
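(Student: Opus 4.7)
The plan is to treat $(-\Delta)^{i\gamma,0}$ as a Calder\'on--Zygmund convolution operator on $X$ whose kernel $\kappa^{\gamma,0}=\zeta\kappa^{\gamma}$ is supported in the unit ball $B(x_0,1)$. First I would establish a $\gamma$-uniform $L^{2}\to L^{2}$ bound, then derive pointwise size and gradient estimates on $\kappa^{\gamma,0}$ with polynomial dependence on $\gamma$; the standard Calder\'on--Zygmund machinery then yields (\ref{L1wnorm}), and Marcinkiewicz interpolation followed by duality produces (\ref{Lpnorm}) for all $p\in(1,\infty)$. The strategy parallels the corresponding arguments in \cite{ALEXOLO, GIUMA}.

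The $L^{2}$ bound is essentially automatic: since the spherical multiplier $(\Vert\lambda\Vert^{2}+\Vert\rho\Vert^{2})^{i\gamma}$ has modulus one and $\mathcal{H}(\zeta)\in\mathcal{S}(\mathfrak{a}^{\ast})^{W}\subset L^{1}(\mathfrak{a}^{\ast})$, mimicking the manipulation in (\ref{tj02}) gives
$\Vert(-\Delta)^{i\gamma,0}\Vert_{L^{2}\to L^{2}}\leq\Vert\mathcal{H}(\zeta)\ast(\Vert\cdot\Vert^{2}+\Vert\rho\Vert^{2})^{i\gamma}\Vert_{L^{\infty}(\mathfrak{a}^{\ast})}\leq\Vert\mathcal{H}(\zeta)\Vert_{L^{1}(\mathfrak{a}^{\ast})}$,
a bound independent of $\gamma$.

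The core step is the pointwise analysis of $\kappa^{\gamma}$ on $B(x_0,1)$. Using the subordination identity $\mu^{i\gamma}=\Gamma(-i\gamma)^{-1}\int_{0}^{\infty}t^{-1-i\gamma}e^{-t\mu}\,dt$ (suitably regularized at $t=0$), one writes $\kappa^{\gamma}$ as a Mellin-type integral of the heat kernel $p_{t}$ twisted by $e^{t\Vert\rho\Vert^{2}}$. Splitting the $t$-integral at the scale $t\sim|x|^{2}$ and integrating by parts $[n/2]+1$ times in $t$ against the oscillatory factor $t^{-1-i\gamma}$, combined with the short-time estimate (\ref{ostellari}) and the analogous gradient bound for $p_{t}$, would give
$|\kappa^{\gamma}(x)|+|x|\,|\nabla\kappa^{\gamma}(x)|\leq C(1+|\gamma|)^{[n/2]+1}|x|^{-n}$, $0<|x|\leq 1$,
with each integration by parts contributing one factor of $1+|\gamma|$. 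These bounds translate directly into the Hörmander integrability condition for $\kappa^{\gamma,0}$, with constant $C(1+|\gamma|)^{[n/2]+1}$.

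Because $\kappa^{\gamma,0}$ is supported in a unit ball and the local volume bound (\ref{vol}) makes the relevant portion of $X$ effectively a space of homogeneous type, the classical Calder\'on--Zygmund decomposition applies and delivers the weak type $(1,1)$ estimate (\ref{L1wnorm}). Marcinkiewicz interpolation with the $L^{2}$ bound then yields (\ref{Lpnorm}) for $1<p\leq 2$, and duality extends it to $2<p<\infty$ since $(-\Delta)^{-i\gamma,0}$ satisfies identical estimates. The main technical obstacle is the rigorous execution of the integration-by-parts argument for $\kappa^{\gamma}$: the Mellin integral is only conditionally convergent at $t=0$, so one must carefully combine the cancellation from $t^{-1-i\gamma}$ with the short-time heat kernel bounds on each side of the cutoff $t\sim|x|^{2}$ to produce both the size and gradient estimates with the stated polynomial control in $\gamma$.
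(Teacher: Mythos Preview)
Your approach differs substantially from the paper's and contains a gap in the kernel-estimate step. The paper does not derive pointwise Calder\'on--Zygmund bounds on $\kappa^{\gamma,0}$ at all; instead it invokes Anker's H\"ormander--Mikhlin multiplier theorem on noncompact symmetric spaces \cite[Corollary~17]{AN} as a black box. That result bounds both the $L^{p}\to L^{p}$ and the $L^{1}\to L^{1,w}$ norms of a local multiplier operator by $\sup_{k\geq 0}\Vert m_{k}^{\gamma}\Vert_{H_{2}^{\sigma/2}}$, where the $m_{k}^{\gamma}$ are dyadic pieces of $m^{\gamma}(\lambda)=(\Vert\lambda\Vert^{2}+\Vert\rho\Vert^{2})^{i\gamma}$ and $\sigma>n$. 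Computing these Sobolev norms with $\sigma/2=[n/2]+1$ is a short calculus exercise: each derivative of $m^{\gamma}$ on a dyadic shell picks up one factor of $(1+|\gamma|)$ from the chain rule, and that is the source of the exponent in (\ref{Lpnorm}) and (\ref{L1wnorm}). The whole proof is a few lines.

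Your proposed mechanism for that exponent does not work as stated. The subordination identity carries the prefactor $\Gamma(-i\gamma)^{-1}$, whose modulus grows like $|\gamma|^{1/2}e^{\pi|\gamma|/2}$; integration by parts against the oscillatory factor $t^{-1-i\gamma}$ \emph{gains} a factor $|\gamma|^{-1}$ per step, not $1+|\gamma|$, so $[n/2]+1$ such steps can neither compensate the exponential prefactor nor \emph{produce} the polynomial loss you want. The local bound $|\kappa^{\gamma}(x)|\lesssim(1+|\gamma|)^{n/2}|x|^{-n}$ is indeed true, but it comes from the derivative structure of the multiplier (equivalently from the Gamma-ratio in the explicit Fourier inversion), not from the heat-kernel integration-by-parts scheme you sketch. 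If you want a direct kernel route, the honest path is to reprove the relevant piece of Anker's theorem---which again reduces to the same dyadic Sobolev check---rather than to pass through the divergent subordination integral.
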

\begin{proof}
	To prove the lemma, we shall proceed as in \cite{AN}.  More precisely, by using a smooth, radial partition of unity (and thus invariant by the Weyl group), we decompose the multiplier $m^{\gamma}(\lambda)=(\|\lambda\|^2+\|\rho\|^2)^{i\gamma}$ as follows
	\[
	m^{\gamma}(\lambda)=\sum_{k=0}^{+\infty}m^{\gamma}_k(2^{-k}\lambda),
	\]
	where $\operatorname{supp}m^{\gamma}_0 \subset \{\|\lambda\|\leq 2 \}$ and $\operatorname{supp}m^{\gamma}_k \subset \{1/2\leq \|\lambda\|\leq 2 \}$ for $k\geq 1$.
	Then, for every $p\in (1, +\infty)$, we have
	\begin{equation}\label{Lpimag}
	\|(-\Delta)^{i\gamma,0}\|_{p\rightarrow p}\leq c_p \sup_{k\geq 0}\|m^{\gamma}_k\|_{H_2^{\sigma/2}}, 
	\end{equation}
	with $\sigma >n$ and $H_2^{\sigma/2}$ the usual Sobolev space, \cite[Corollary 17, ii]{AN}. Note that the same upper bound also holds for the $L^1\rightarrow L^{1, w}$ norm of $(-\Delta)^{i\gamma,0}$, \cite{AN}. A straightforward computation yields
	\begin{equation}\label{mknorm}
	\|m^{\gamma}_k\|_{H_2^{\sigma/2}}\leq c(1+|\gamma|)^{\sigma /2},
	\end{equation}
	for $\sigma /2$ an integer, and Lemma \ref{Anker} follows from (\ref{Lpimag}).
\end{proof}
\begin{proof}[End of the proof of Proposition \ref{proplocal}] We shall complete the proof for the $L^p$ boundedness of $S_{*}^{z, 0}$, $p\in(1, \infty)$; the $L^1\rightarrow L^{1,w}$ result is similar, thus omitted. Recall that (\ref{1,0}) states that
	\[
	S_{\ast}^{z,0}f\leq \sup_{R\geq \|\rho\|^2} |M^0(-R^{-1}\Delta)f|+\sup_{R\geq \|\rho\|^2}|f\ast p^0_{R^{-1}}|.
	\] Note that since $p_t(x)\geq 0$, for every $x\in X$, we have $p_t^0(x)\leq p_t(x)$. Thus, 
	\begin{equation}\label{L1w}
	|(f\ast p_t^0)(x)|\leq (|f|\ast p_t)(x).
	\end{equation}
	Also, it is known (see for example \cite[Corollary 3.2]{AN2}) that the heat maximal operator $\sup\limits_{t>0}|e^{t\Delta}f|$ is $L^p$-bounded and also $L^1\rightarrow L^{1,w}$ bounded. This implies that the operator $\sup\limits_{R\geq\|\rho\|^2}|\ast p_{R^{-1}}^0|$ is also $L^p$-bounded and $L^1\rightarrow L^{1,w}$ bounded.
	Thus, from (\ref{1,0}), it follows that to prove the $L^p$-boundedness of the operator $S_{\ast}^{z,0}$, it suffices to prove the $L^p$-boundedness of the operator $\sup\limits_{R\geq\|\rho\|^2}|M^0(-R^{-1}\Delta)|$, and similarly for the $L^1\rightarrow L^{1,w}$ boundedness. 
	
	From (\ref{Mellin}) and (\ref{mknorm}), we have that
	\begin{align*}
	\|\sup_{R\geq \|\rho\|^2} |M^0(-R^{-1}\Delta)|\|_p &\leq \int_{-\infty}^{+\infty}|\mathcal{M}(\gamma)|\|(-\Delta)^{i\gamma,0}\|_{p\rightarrow p}|\|f\|_pd\gamma\\
	&\leq c|\|f\|_p \int_{-\infty}^{+\infty}(1+|\gamma|)^{-(\operatorname{Re}z+1)})(1+|\gamma|)^{[n/2]+1}d\gamma \\
	&\leq c|\|f\|_p \int_{-\infty}^{+\infty}(1+|\gamma|)^{-(\operatorname{Re}z-[n/2])}d\gamma\leq c\|f\|_p,
	\end{align*}
	whenever $\operatorname{Re}z\geq n-\frac{1}{2}$. This completes the proof of Proposition \ref{proplocal}.
\end{proof}
\emph{Proof of Theorem 2.} The proof of Theorem 2 follows from Stein's complex interpolation, between the $L^p$ result for $p$ close to $1$ and the $L^2$ result (Propositions 12, 13 and 14).

\emph{Proof of Theorem 3.} As it is already mentioned in the Introduction, from Theorem \ref%
{Th1} and Propositions 13 and 14, and well-known measure theoretic arguments (see for example \cite[Theorem 2.1.14]{GRAF}), we deduce the almost everywhere convergence of Riesz means: if $1\leq
p\leq 2$ and $\operatorname{Re}z>\left( n-\frac{1}{2}\right) \left( \frac{2}{p}%
-1\right) $, then
\begin{equation*}
\lim\limits_{R\rightarrow +\infty }S_{R}^{z}(f)(x)=f(x),\;\text{a.e., for }%
f\in L^{p}(X).
\end{equation*}
\section*{Acknowledgement}
The authors would like to thank the anonymous referee for the valuable comments and remarks.

\end{document}